\newtheorem{thm}{Theorem}
\newtheorem{lem}[thm]{Lemma}
\theoremstyle{definition}
\newtheorem{defn}[thm]{Definition}
\newtheorem{prop}[thm]{Proposition}
\theoremstyle{remark}
\theoremstyle{definition}
\newtheorem{exmp}[thm]{Example}
\newtheorem{rmk}[thm]{Remark}
\newtheorem{prob}{Problem}
\providecommand{\R}[0]{\mathbb{R}}
\providecommand{\N}[0]{\mathbb{N}}
\providecommand{\C}[0]{\mathbb{C}}
\newcommand{\bdf}{\begin{defn}}
\newcommand{\edf}{\end{defn}}
\newcommand{\brm}{\begin{rmk}}
\newcommand{\erm}{\end{rmk}}
\newcommand{\beq}{\begin{equation}}
\newcommand{\eeq}{\end{equation}}
\newcommand{\barr}[1]{\begin{array}{#1}}
\newcommand{\earr}{\end{array}}
\newtheorem*{thmmain}{Theorem \ref{thm:main}}
\DeclareMathOperator{\Arg}{Arg}
\def\Re{\mathop{\text{\rm Re}}}
\def\Im{\mathop{\text{\rm Im}}}
\newcommand{\lr}[1]{\left(#1\right)}
\newcommand{\br}[1]{\left[#1\right]}
\newcommand{\set}[1]{\left\{#1\right\}}
\newcommand{\disp}{\displaystyle}
\numberwithin{equation}{section}
\numberwithin{figure}{section}
\author{R. Bates, R. Yoshida}
\date{\today}
\title{Quadratic Hyperbolicity Preservers \& \mbox{Multiplier Sequences}}
\begin{document}
\setcounter{page}{1}
\pagenumbering{roman}

\begin{abstract}
It is known (see \cite[Br\"and\'en, Lemma 2.7]{B10}) that a necessary condition for $T:=\sum Q_k(x) D^k$ to be hyperbolicity preserving is that $Q_k(x)$ and $Q_{k-1}(x)$ have interlacing zeros. We characterize all quadratic linear operators, as a consequence we find several classes of $P_n$-multiplier sequence. 
\end{abstract}

\maketitle
\thispagestyle{empty}




\section{Introduction}

\setcounter{page}{1}
\pagenumbering{arabic}

It is well known (see {\cite{peet},\cite[p. 32]{piotr}}) that if $T$ is any linear operator defined on the space of real polynomials, $R[x]$, then there is a sequence of real polynomials, $\{Q_k(x)\}$, such that 
\begin{equation} \label{eq:linear-operator}
T=\sum Q_k(x) D^k,\ \mbox{where } D=\frac{d}{dx}.
\end{equation}
Our investigation involves such operators that act on polynomials, in particular, we are interested in polynomials with the following property.

\bdf
A polynomial $f(x)\in\R[x]$ whose zeros are all real is said to be \emph{hyperbolic}. Following the convention of G.\ P\'olya and J.\ Schur \cite[p.89]{PS}, the constant $0$ is also deemed to be hyperbolic. 
\edf

\bdf 
A linear operator $T:\R[x]\to\R[x]$ is said to \emph{preserve hyperbolicity} (or $T$ is a \emph{hyperbolicity preserver}) if $T[f(x)]$ is a hyperbolic polynomial, whenever $f(x)$ is a hyperbolic polynomial.
\edf

Hyperbolicity preserving operators have been studied by virtually every author who has studied hyperbolic polynomials (see \cite{CC04} and the references contained therein). The focus of our investigation involves the relationship between hyperbolicity preserving operators and hyperbolic polynomials with interlacing zeros.

\bdf \label{def:interlacing}
Let $f,g\in\R[x]$ with $\deg(f)=n$ and $\deg(g)=m$. We say that $f$ and $g$ have \emph{interlacing zeros}, if $f$ is hyperbolic with zeros $\alpha_1,\ldots,\alpha_n$ and $g$ is hyperbolic with zeros $\beta_1,\ldots,\beta_m$, where $|n-m|\le 1$, with one of the following forms holding:
\begin{enumerate}
\item $ \alpha_1 \le \beta_1 \le \alpha_2 \le \beta_2 \le \ldots \le \alpha_n \le \beta_m$, 
\item $ \beta_1 \le \alpha_1 \le \beta_2 \le \alpha_2 \le \ldots \le \beta_m \le \alpha_n$,
\item $ \alpha_1 \le \beta_1 \le \alpha_2 \le \beta_2 \le \ldots \le \beta_m \le \alpha_n$, or 
\item $ \beta_1 \le \alpha_1 \le \beta_2 \le \alpha_2 \le \ldots \le \alpha_n \le \beta_m$.
\end{enumerate}
We will also say that the zeros of any two hyperbolic polynomials of degree 0 or 1 interlace. 
\edf

\bdf \label{def:proper}
Given two non-zero polynomials $f,g\in\R[x]$, we say $f$ and $g$ are in \emph{proper position} and write $f\ll g$ if one of the following conditions holds:
\begin{enumerate}
\item $f$ and $g$ have interlacing zeros with form (1) or (4) in Definition \ref{def:interlacing} and the leading coefficients of $f$ and $g$ are of the same sign, or
\item $f$ and $g$ have interlacing zeros with form (2) or (3) in Definition \ref{def:interlacing}, and the leading coefficients of $f$ and $g$ are of opposite sign.
\end{enumerate}
We will say that the zero polynomial is in proper position with any other hyperbolic polynomial $f$ and write $0\ll f$ or $f\ll 0$. 
\edf

Notice that, by Definition \ref{def:proper}, if $f$ and $g$ are in proper position then $f$ and $g$ are hyperbolic. Also, to be clear, a non-zero constant can only be in proper position with another constant or a linear polynomial. However, the zero polynomial is in proper position with any hyperbolic polynomial. 

\bdf
For any two real polynomials $f$ and $g$, the \emph{Wronskian} of $f$ and $g$ is defined, on $\R$, by
\[
W[f,g]:=f(x)g'(x)-f'(x)g(x).
\]
\edf

It is a common exercise to show that for $f$ and $g$ with interlacing zeros, if $W[g,f]\le 0$ on the whole real line then $f\ll g$. 

The following Lemma demonstrates that proper position plays an important role in understanding hyperbolicity preservers. 

\begin{lem}[P. Br\"and\'en {\cite[Lemma 2.7]{B10}}] \label{thm:bra-int}
Suppose the linear operator 
\begin{equation} \label{eq:t-equ} 
T=\sum_{k=M}^N Q_k(x) D^k,
\end{equation}
where $Q_k(x)\in\R[x]$ for $M\le k\le N$, and $Q_M(x)Q_N(x)\not\equiv 0$, preserves hyperbolicity. Then $Q_j(x)\ll Q_{j+1}(x)$ for $M\le j\le N-1$. In particular, $Q_j(x)$ is hyperbolic or identically zero for all $M\le j\le N$. 
\end{lem}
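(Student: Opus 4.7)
My plan is to feed $T$ carefully chosen one-parameter families of hyperbolic polynomials and then exploit the resulting hyperbolicity of the images via the classical Hermite-Kakeya-Obreschkoff theorem (HKO): $p + \alpha q$ is hyperbolic for every $\alpha \in \R$ if and only if $p$ and $q$ are in proper position.

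The base case is essentially free. Applying $T$ to $f(x) = (x-\alpha)^M$, every term with $k > M$ annihilates $f$ and the sole surviving summand gives $T[f] = M!\, Q_M(x)$, which is thus hyperbolic. To obtain the first proper-position statement, apply $T$ to $f(x) = (x-\alpha)^{M+1}$, hyperbolic for every $\alpha \in \R$; direct computation yields
\[
T\br{(x-\alpha)^{M+1}} \;=\; (M+1)!\br{Q_M(x)(x-\alpha) + Q_{M+1}(x)}.
\]
Rewriting the bracket as $[\,xQ_M + Q_{M+1}\,] + (-\alpha)Q_M$ and invoking HKO on this one-parameter $\alpha$-family, I conclude that $Q_M$ and $xQ_M + Q_{M+1}$ are in proper position. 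Since $xQ_M + Q_{M+1}$ agrees with $Q_{M+1}$ at every zero of $Q_M$, they have the same sign pattern on the zero set of $Q_M$, so the interlacing with $Q_M$ transfers; after checking the leading coefficients, this yields $Q_M \ll Q_{M+1}$.

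For a general consecutive pair I would replace $T$ by the modified operator $T_a[f] := T[(x-a)f]$. Since multiplication by $(x-a)$ preserves hyperbolicity, $T_a$ is again a hyperbolicity preserver, and a short Leibniz-rule computation gives
\[
T_a \;=\; \sum_k \br{(x-a)\,Q_k(x) + (k+1)\,Q_{k+1}(x)}\,D^k.
\]
Repeating the previous two steps for $T_a$ (and for further compositions with several factors $(x - a_i)$) produces hyperbolic combinations of the $Q_k$'s involving the free real parameters $a_i$; letting these parameters vary over $\R$ and passing to the limit via Hurwitz's theorem ought to peel off the desired pairwise statements $Q_j \ll Q_{j+1}$ one at a time. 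The main obstacle is exactly this last step: for $j > M$ the polynomial $T\br{(x-\alpha)^{j+1}}$ mixes \emph{all} coefficients $Q_M, \ldots, Q_{j+1}$, so a single application of HKO never isolates the pair $(Q_j, Q_{j+1})$. The iterative composition with multiplication operators is intended to cancel the lower-order contributions, but tracking proper position through successive compositions, through the limits in the $a_i$, and through the sign and leading-coefficient bookkeeping is where the real work lies.
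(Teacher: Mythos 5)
This lemma is not proved in the paper at all: it is quoted verbatim as Br\"and\'en's Lemma~2.7 from \cite{B10} and used as a black box, so there is no in-paper argument to compare your proposal against. As for your proposal on its own terms, it is genuinely incomplete, and you say as much. Even the ``base case'' has a real gap: HKO applied to the $\alpha$-family $\bigl(xQ_M + Q_{M+1}\bigr) + (-\alpha)Q_M$ tells you only that $Q_M$ and $xQ_M+Q_{M+1}$ are in proper position, which after expanding the Wronskian gives
\[
W\bigl[xQ_M+Q_{M+1},\,Q_M\bigr] \;=\; W\bigl[Q_{M+1},\,Q_M\bigr] - Q_M^2 \;\le\; 0,
\]
i.e.\ $W[Q_{M+1},Q_M] \le Q_M^2$, which is strictly weaker than the $W[Q_{M+1},Q_M]\le 0$ you need for $Q_M \ll Q_{M+1}$. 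Your sign-pattern remark is the right idea for closing this, but it silently assumes simple zeros of $Q_M$, the expected degree of $Q_{M+1}$, and a placement argument for the two ``extra'' zeros of $Q_{M+1}$ outside $[r_1,r_m]$ that you do not supply; all of these need to be handled (e.g.\ by a continuity/Hurwitz argument over the full family $(x-\alpha)Q_M+Q_{M+1}$, not one application of HKO).

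The bigger issue is the inductive step, which you flag but do not resolve. The composition trick $T_a[f]=T[(x-a)f]$ always keeps $Q_M$ as the top-degree (in $a$) contribution of the lowest nonzero coefficient, so letting the $a_i$ run off to infinity just re-extracts $Q_M$; it does not isolate an interior pair $(Q_j,Q_{j+1})$. The route that actually closes this cleanly — and the one consistent with the toolkit already in the paper — goes through the Borcea--Br\"and\'en symbol characterization (Theorem~\ref{thm:bra-hyp1}): $T$ preserves hyperbolicity iff $G_T(x,w)=\sum Q_k(x)(-w)^k$ is nonvanishing on $H^+\times H^+$, and the coefficient polynomials (in $w$) of such a bivariate real stable polynomial are automatically pairwise in proper position. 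That is the content of Br\"and\'en's Lemma~2.7 and is not recoverable by a single-variable HKO induction without substantially more work than you sketch.
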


In the special case for $M=0$ and $N=2$ in \eqref{eq:t-equ}, we find sufficient conditions that guarantee when $T$ preserves hyperbolicity. Our main result is the following: 

\begin{thm}  \label{thm:main} \label{thm:quad-wron'}
Suppose $Q_2,Q_1,Q_0$ are real polynomials such that $deg(Q_2)=2$, $deg(Q_1)\le 1$, $deg(Q_0)=0$. Then 
\begin{equation} \nonumber
T=Q_2D^2+Q_1D+Q_0
\end{equation}
preserves hyperbolicity if and only if
\begin{equation} \nonumber
W[Q_0,Q_2]^2-W[Q_0,Q_1]W[Q_1,Q_2]\le 0, \;\; \mbox{and} \;\; Q_0 \ll Q_1 \ll Q_2.
\end{equation}
\end{thm}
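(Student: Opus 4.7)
The proof splits naturally into necessity and sufficiency.

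\emph{Necessity.} Assume $T$ preserves hyperbolicity. Br\"and\'en's Lemma~\ref{thm:bra-int} immediately yields $Q_0 \ll Q_1 \ll Q_2$. To extract the Wronskian inequality, apply $T$ to the test polynomial $(x-r)^n$ for $n\ge 2$ and $r\in\R$:
\[
T[(x-r)^n] = (x-r)^{n-2}\, g_{n,r}(x), \qquad g_{n,r}(x) := n(n-1)Q_2(x) + n(x-r)Q_1(x) + (x-r)^2 Q_0.
\]
The factor $(x-r)^{n-2}$ is already hyperbolic, so hyperbolicity of the left-hand side forces $g_{n,r}$ to be hyperbolic in $x$. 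Expanding around $y = x-r$ gives
\[
g_{n,r}(r+y) = \bigl[n(n-1)\alpha + na + Q_0\bigr] y^2 + \bigl[n(n-1)Q_2'(r) + nQ_1(r)\bigr] y + n(n-1)Q_2(r),
\]
where $\alpha$ and $a$ are the leading coefficients of $Q_2$ and $Q_1$. Hyperbolicity of $g_{n,r}$ is equivalent to the discriminant in $y$ being non-negative for every $n\ge 2$ and $r\in\R$. Treating this discriminant as a polynomial in $n$ and extracting its scaled leading behavior as $n\to\infty$, then rewriting via the identities $W[Q_0,Q_j] = Q_0\,Q_j'$ and $W[Q_1,Q_2] = Q_1 Q_2' - aQ_2$, yields the inequality $W[Q_0,Q_2]^2 \le W[Q_0,Q_1]\,W[Q_1,Q_2]$ at each $r\in\R$.

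\emph{Sufficiency.} Assume $Q_0 \ll Q_1 \ll Q_2$ and the Wronskian inequality. Running the discriminant computation in reverse, the Wronskian inequality combined with the hyperbolicity of the individual $Q_j$ (coming from the proper-position chain) implies that $g_{n,r}(x)$ is hyperbolic for every $n\ge 2$ and $r\in\R$; hence $T[(x-r)^n]$ is hyperbolic for all $n,r$. To upgrade from this test family to an arbitrary hyperbolic $f$, use a homotopy argument: deform $f$ continuously through hyperbolic polynomials of the same degree into some $(x-r)^n$, and track the zeros of $T[f_t]$ via Hurwitz's theorem. Any emergence of a non-real zero pair during the deformation must be preceded by $T[f_{t_0}]$ possessing a real double zero; the proper-position hypothesis $Q_0 \ll Q_1 \ll Q_2$ together with the Wronskian inequality rules out such a configuration, so hyperbolicity persists along the path.

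The principal obstacle is the sufficiency direction beyond the test family. The Wronskian inequality controls only the discriminant of the specific quadratic $g_{n,r}$ arising from the $(x-r)^n$ test polynomials; transferring this control to $T[f]$ for general hyperbolic $f$ requires careful exploitation of the interlacing information encoded by $Q_0 \ll Q_1 \ll Q_2$ to block the spawning of complex zero pairs.
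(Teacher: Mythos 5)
Your proposal takes a genuinely different route from the paper, but as written it contains real gaps in both directions, with the sufficiency gap being fatal.

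The paper's proof runs entirely through the Borcea--Br\"and\'en two-variable stability criterion (Theorem~\ref{thm:bra-hyp1}): $T$ preserves hyperbolicity iff $Q_2(x)w^2 - Q_1(x)w + Q_0 \neq 0$ for all $x,w$ in the open upper half-plane. After a normalization (factoring out the leading coefficient, writing $Q_2 = (x+r_1)(x+r_2)$, decomposing $Q_1$ against the partial-fraction basis $\{x+r_1, x+r_2\}$), this reduces to showing $((x+r_1)w-a)((x+r_2)w-b) \ne r$ on $H^+\times H^+$ iff $r\in[0,ab]$, which is Lemma~\ref{lem:neg}, proved by a direct geometric construction in the complex plane. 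The Wronskian form of the criterion in the main theorem is then shown in Theorem~\ref{thm:quad-wron} to be algebraically equivalent to the explicit parametric condition from Theorem~\ref{thm:quad-op}. No test polynomials and no zero-tracking appear.

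Your necessity argument is plausible in spirit but underspecified at the key step. The discriminant of $g_{n,r}$ in $y$ is a degree-four polynomial in $n$, and its scaled leading behavior as $n\to\infty$ is $Q_2'(r)^2 - 4\alpha Q_2(r)$, i.e.\ the discriminant of $Q_2$ --- something already guaranteed by the hyperbolicity of $Q_2$, not the Wronskian inequality. Extracting $W[Q_0,Q_2]^2 \le W[Q_0,Q_1]\,W[Q_1,Q_2]$ from the subleading terms, if it is possible at all, requires a careful case analysis that you have not supplied; merely stating that one "rewrites via the Wronskian identities" hides exactly the nontrivial step.

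Your sufficiency argument is where the proposal actually breaks. Preserving hyperbolicity on the one-parameter family $\{(x-r)^n\}$ does not imply preserving hyperbolicity in general --- the examples $T_1,\ldots,T_8$ in the paper already show that nearby families of test polynomials are not enough. The Hurwitz/homotopy argument as stated is circular: you correctly note that a complex pair must first collide on the real axis as a double zero of $T[f_{t_0}]$, but then you assert, without a mechanism, that "$Q_0\ll Q_1\ll Q_2$ together with the Wronskian inequality rules out such a configuration." This is not true as stated: a hyperbolicity preserver $T$ can very well send a hyperbolic $f$ to a polynomial with a real double zero (e.g.\ $T = D^2$ applied to $(x^2-1)^2$), so the existence of a double zero at $t_0$ is not itself a contradiction. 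The contradiction must come from analyzing what happens infinitesimally past $t_0$, and that is precisely the hard part your outline does not address. You would also need to rule out degree drops in $T[f_t]$ along the homotopy, another source of zeros escaping to infinity rather than to the complex plane. In short, the obstacle you flag in your last paragraph is not a technicality to fill in later; it is the entire theorem, and the route through Borcea--Br\"and\'en that the paper takes is a way of avoiding rather than solving it.
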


\section{Quadratic Hyperbolicity Preservers} 

We concern ourselves with operators of the following form.

\bdf We will call the second order differential operators of the form
\begin{equation} \label{eq:qua-op} 
T=Q(x)D^2+P(x)D+R(x) 
\end{equation}
a \emph{quadratic} operator, where the polynomials, $Q(x)$ is quadratic, $P(x)$ is linear, and $R(x)$ is constant. If \eqref{eq:qua-op} is also hyperbolicity preserving, then we will refer to it as a \emph{quadratic hyperbolicity preserver}. 
\edf

The following proposition presents an operator that has been quite influential to our exposition.

\begin{prop}[Forg\'acs et al. {\cite[Proposition 5]{bdfu}}]
If $0< d < 1$, then the operator 
\begin{equation}\nonumber
T=(x^2-1)D+2xD+d
\end{equation}
preserves hyperbolicity.
\end{prop}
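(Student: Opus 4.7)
My plan is to derive the proposition as an immediate corollary of Theorem~\ref{thm:main}, by identifying $Q_2(x) = x^2 - 1$, $Q_1(x) = 2x$, $Q_0(x) = d$ (reading the operator as the intended second-order $T = (x^2-1)D^2 + 2xD + d$) and checking the two hypotheses of that theorem.

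First I would verify the proper-position chain $Q_0 \ll Q_1 \ll Q_2$. The pair $(d, 2x)$ consists of a positive constant and a linear polynomial with positive leading coefficient, so proper position holds by the degree-$\le 1$ conventions supplied at the ends of Definitions~\ref{def:interlacing} and \ref{def:proper}. For $(2x, x^2-1)$, the unique zero $0$ of $Q_1$ sits strictly between the zeros $-1, 1$ of $Q_2$, yielding the pattern of form~(4) in Definition~\ref{def:interlacing}; both leading coefficients are positive, placing us in case~(1) of Definition~\ref{def:proper}, and so $Q_1 \ll Q_2$.

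Next I would compute the three relevant Wronskians: $W[Q_0, Q_1] = 2d$, $W[Q_1, Q_2] = 2(x^2+1)$, and $W[Q_0, Q_2] = 2dx$. Substituting into the main inequality,
\begin{equation*}
W[Q_0, Q_2]^2 - W[Q_0, Q_1]\,W[Q_1, Q_2] = 4d^2 x^2 - 4d(x^2 + 1) = 4d\bigl((d-1)x^2 - 1\bigr).
\end{equation*}
Since $0 < d < 1$, we have $d > 0$ and $(d-1)x^2 - 1 \le -1 < 0$ for every real $x$, so the expression is strictly negative on $\R$. Both hypotheses of Theorem~\ref{thm:main} are thereby met, and it delivers that $T$ preserves hyperbolicity.

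No step here is truly an obstacle: the proposition collapses into a short algebraic verification once Theorem~\ref{thm:main} is in hand. The one delicate point, really a bookkeeping matter, is justifying the proper-position status of the constant-linear pair $(Q_0, Q_1)$, which rests on the degree-$\le 1$ conventions in Definitions~\ref{def:interlacing}--\ref{def:proper} rather than on any interlacing computation. A more self-contained route, closer in spirit to the original Forg\'acs et al.\ argument, would instead observe that the Legendre polynomials $P_n$ are eigenfunctions of $T$ with positive eigenvalues $\lambda_n = n(n+1) + d$ and then verify that $\{\lambda_n\}_{n\ge 0}$ is a Legendre multiplier sequence; but this calculation is precisely the sort of content Theorem~\ref{thm:main} packages uniformly for all quadratic operators.
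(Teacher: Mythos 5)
The paper never proves this proposition; it simply cites Forg\'acs et al.\ (their Proposition 5) and moves on, so there is no internal argument to compare yours against. You correctly read the first displayed $D$ as $D^2$ (a typo in the statement), and your derivation from Theorem~\ref{thm:main} is sound: the Wronskians are $W[Q_0,Q_1]=2d$, $W[Q_1,Q_2]=2(x^2+1)$, $W[Q_0,Q_2]=2dx$, so that
\begin{equation*}
W[Q_0,Q_2]^2-W[Q_0,Q_1]W[Q_1,Q_2]=4d\bigl((d-1)x^2-1\bigr)<0\quad\text{for all }x\in\R
\end{equation*}
when $0<d<1$, and the proper-position chain $d\ll 2x\ll x^2-1$ holds under the degree-$\le 1$ conventions (positive leading coefficients throughout, with $0$ lying strictly between $\pm1$). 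Crucially, this is not circular: the chain Lemma~\ref{lem:angle} $\to$ Lemma~\ref{lem:neg} $\to$ Theorem~\ref{thm:quadop} $\to$ Theorems~\ref{thm:quad-op}, \ref{thm:quad-op'} $\to$ Lemma~\ref{thm:quarter} $\to$ Theorem~\ref{thm:quad-wron} $\to$ Theorem~\ref{thm:main} nowhere relies on this proposition, which the authors offer only as motivation. The genuine difference, then, is one of logical ordering and scope: you present the proposition as a quick corollary of the paper's main machinery, whereas in the paper it sits upstream as a known example that prompted the general characterization. Your closing remark correctly identifies the self-contained alternative -- show the Legendre polynomials are eigenfunctions of $T$ with eigenvalues $n(n+1)+d$ and verify these form a Legendre multiplier sequence -- which is presumably what the cited source does; that route is more elementary but must be rerun ad hoc for each operator, which is exactly the repetition Theorem~\ref{thm:main} eliminates.
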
 

For motivation, we present several other similar examples of quadratic operators. 

\begin{exmp} \label{exmpops}
\begin{eqnarray} 
T_1 &=& (x^2-1)D^2 + 2xD - 1 \label{eq:t-1} \\ 
T_2 &=& (x^2-1)D^2 + 2xD + 0 \label{eq:t-2} \\ 
T_3 &=& (x^2-1)D^2 + 2xD + 1 \label{eq:t-3} \\ 
T_4 &=& (x^2-1)D^2 + 2xD + 2 \label{eq:t-4} \\
T_5 &=& (x^2-1)D^2 - 2xD - 1 \label{eq:t-5} \\
T_6 &=& (x^2-1)D^2 - 2xD + 0 \label{eq:t-6} \\
T_7 &=& (x^2-1)D^2 - 2xD + 1 \label{eq:t-7} \\
T_8 &=& (x^2-1)D^2 - 2xD + 2 \label{eq:t-8} 
\end{eqnarray} 
It was shown in \cite[Lemma 5]{bdfu} that \eqref{eq:t-3} is hyperbolicity preserving. Notice that $T_2=D(x^2-1)D$ and thus \eqref{eq:t-2} is hyperbolcity preserving as well. The other six examples can easily be shown to not preserve hyperbolicity. 
\begin{eqnarray} 
T_1[x^2-1] &=& 5x^2+2. \\ 
T_4[(x-10)^3] &=& 2(x-10)(7x^2-50x+97). \\
T_5[x^2] &=& -3x^2-2. \\
T_6[x^2] &=& -2x^2-2. \\
T_7[x^2] &=& -x^2-2. \\
T_8[(x-10)^3] &=& 2(x-10)(x^2+10x+97).
\end{eqnarray} 
\end{exmp}

These examples show that the property of interlacing coefficients is not sufficient to establish hyperbolicity preserving. Furthermore, \eqref{eq:t-4} demonstrates that the condition of proper position in Lemma \ref{thm:bra-int} is also not sufficient to establish hyperbolicity preserving. The examples motivate us to find the necessary and sufficient conditions on the polynomial coefficients in the quadratic operator \eqref{eq:qua-op}. 

We will completely characterize all quadratic hyperbolicity preservers. For our characterization, we will need a result due to J. Borcea and P. Br\"and\'en.

\begin{thm}[J. Borcea, P. Br\"and\'en {\cite[Theorem 1.3]{BB10a}}] \label{thm:bra-hyp1}
Let $T:\R[x]\to \R[x]$ be a finite differential linear operator, thus there exists real polynomials $\{Q_k(x)\}_{k=0}^n$ such that 
\[
T=\sum_{k=0}^n Q_k(x) D^k.
\] 
$T$ is hyperbolicity preserving, if and only if, 
\[
\sum_{k=0}^n Q_k(x)(-w)^k\not=0.
\] 
for every $x,w\in H^+$. 
\end{thm}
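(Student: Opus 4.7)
My plan is to bridge the univariate hyperbolicity of $T[f]$ with a bivariate stability property of the symbol $\Phi_T(x,w):=\sum_k Q_k(x)(-w)^k$. Two observations set up the bridge. First, a real polynomial is hyperbolic iff it has no zeros in the open upper half plane $H^+$, since non-real zeros of real polynomials occur in conjugate pairs. Second, the symbol is detected by exponentials via
\[
T[e^{-wx}] \;=\; e^{-wx}\,\Phi_T(x,w),
\]
obtained from $D^k e^{-wx}=(-w)^k e^{-wx}$ and linearity.

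For necessity, I will approximate exponentials by hyperbolic polynomials. For each real $w$ and $N \in \N$, the polynomial $p_{N,w}(x):=(1-wx/N)^N$ is hyperbolic, so $T[p_{N,w}]$ is hyperbolic and hence nowhere vanishing on $H^+$. As $N\to\infty$, $p_{N,w}\to e^{-wx}$ uniformly on compacts, so Hurwitz's theorem forces $\Phi_T(x,w)$ to be either identically zero in $x$ or nowhere zero on $H^+$, giving the symbol condition for all real $w$. To upgrade to complex $w\in H^+$, I will test on products $\prod_j (1-w_j x/N)^N$ with real $w_j$ (still hyperbolic because the factors are), and extract bivariate stability of $\Phi_T$ from a joint-variable Hurwitz argument after letting both the $w_j$ cluster near a desired complex value and $N\to\infty$.

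For sufficiency, given hyperbolic $p(x)=c\prod_{i=1}^m(x-a_i)$, I will polarize $p$ into a multiaffine symmetric polynomial $\tilde p(y_1,\dots,y_m)$, which is stable by Grace--Walsh--Szeg\H{o}. One then constructs a bivariate polynomial $\Psi(x,w)$ from $\Phi_T$ and $\tilde p$ with the property that $T[p](x)$ is recovered by a specialization of $\Psi$ (for instance by expressing $T[p](x)$ through the identity $T[p](x)=\sum_k Q_k(x)p^{(k)}(x)$ and repackaging the derivatives $p^{(k)}$ as partial derivatives of $\tilde p$ evaluated on the diagonal). Closure properties of stable polynomials under diagonalization and real specialization transfer the bistability of $\Phi_T$ to $\Psi$, and final depolarization of the $y_i$ coordinates yields hyperbolicity of $T[p]$.

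The main obstacle is the sufficiency direction: constructing the correct bivariate lift $\Psi$ that both specializes to $T[p]$ and inherits stability from $\Phi_T$. This requires careful Grace--Walsh--Szeg\H{o}-style polarization together with detailed bookkeeping of which operations preserve multivariate stability, which is precisely the technical heart of the Borcea--Br\"and\'en theory and is what makes Theorem~\ref{thm:bra-hyp1} substantive rather than a formal consequence of its setup.
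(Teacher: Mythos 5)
The paper does not offer a proof of this statement; it is quoted verbatim from Borcea and Br\"and\'en \cite[Theorem~1.3]{BB10a} and used as a black box, so there is nothing in the paper to compare your attempt against. Your write-up correctly identifies the symbol identity $T[e^{-wx}]=e^{-wx}\Phi_T(x,w)$ as the organizing object and correctly places the argument inside the Borcea--Br\"and\'en stability framework (Hurwitz limits, Grace--Walsh--Szeg\H{o} polarization, closure properties of stable polynomials), so the strategy is pointed in the right direction.

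That said, as a proof it has genuine gaps, not merely unfinished details. In the necessity direction, testing $T$ on $(1-wx/N)^N$ with $w$ real and invoking Hurwitz only gives nonvanishing of $\Phi_T(\cdot,w)$ on $H^+$ for each fixed \emph{real} $w$; the theorem requires nonvanishing for $w\in H^+$, which is a strictly stronger bivariate-stability condition (slice-wise stability in $x$ for real $w$ does not imply joint stability). Your proposed upgrade via products $\prod_j(1-w_jx/N)^N$ with real $w_j$ does not repair this: such products stay real-rooted and, in any limit, produce only real-frequency exponentials; ``clustering the $w_j$ near a desired complex value'' does not cause $\prod_j(1-w_jx/N)^N$ to converge to $e^{-\bar w x}$ for complex $\bar w$, so no two-variable Hurwitz argument is available. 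The actual passage from real to complex $w$ in Borcea--Br\"and\'en rests on characterizations of bivariate real-stable polynomials via proper position of real slices (a multivariate Hermite--Biehler/Obreschkoff mechanism), not on perturbing the test family. In the sufficiency direction you candidly stop at the point of constructing the bivariate lift $\Psi$, and you are right that this is where the real content lies; as it stands the argument is a plan rather than a proof. Since the paper simply cites the result, the most accurate posture here is to do the same (or reproduce the proof from \cite{BB10a} faithfully), rather than to reconstruct it from scratch.
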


In general, Theorem \ref{thm:bra-hyp1} can be difficult to apply since very little is known about two variable \emph{stable} polynomials (See \cite{BB10a}). The next few lemmas establish a small class of two variable stable polynomials. 

\begin{lem} \label{lem:angle}
Let $A,\ B\in\C-\R$ be two non-real complex numbers such that
\emph{
\begin{enumerate}[(i)]
\item \label{itm:1} $0<\Arg(B)<\Arg(A)<2\pi$,
\item \label{itm:2} $\Arg(A)-\Arg(B)<\pi$, and
\item \label{itm:3} $\Im(A)<\Im(B)$.
\end{enumerate}}
\noindent Then for any $r_1,\ r_2 \in\R$, $r_1<r_2$, there is $x,w\in H^+$ such that $(x+r_1)w=A$ and $(x+r_2)w=B$. 
\end{lem}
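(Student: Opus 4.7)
The plan is to solve the system directly: the two equations $(x+r_1)w=A$ and $(x+r_2)w=B$ are affine in the unknowns, so one can subtract them to get a unique value of $w$, substitute back for $x$, and then verify that each of the three hypotheses forces one of the required inequalities. In particular I expect condition~\eqref{itm:3} to pin down $\Im(w)>0$, and conditions~\eqref{itm:1}--\eqref{itm:2} to pin down $\Im(x)>0$.

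More concretely, first I would subtract the two equations to obtain $(r_1-r_2)w = A-B$, so that the only candidate is
\[
w \;=\; \frac{B-A}{r_2-r_1}.
\]
Since $r_2-r_1>0$, we have $\Im(w)$ of the same sign as $\Im(B-A)$, which by hypothesis~\eqref{itm:3} is positive; hence $w\in H^+$. Substituting into $(x+r_1)w=A$ gives
\[
x \;=\; \frac{A}{w}-r_1 \;=\; \frac{A(r_2-r_1)}{B-A}-r_1.
\]
Since $r_1,r_2\in\R$, the imaginary part of $x$ equals $(r_2-r_1)\,\Im\!\left(\frac{A}{B-A}\right)$, so it suffices to prove that $\Im\!\left(\frac{A}{B-A}\right)>0$.

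The main (and only nontrivial) step will be this last sign check. Multiplying numerator and denominator by $\overline{B-A}$ shows that
\[
\Im\!\left(\frac{A}{B-A}\right) \;=\; \frac{\Im\!\bigl(A\,\overline{B-A}\bigr)}{|B-A|^{2}} \;=\; \frac{\Im(A\bar B)-\Im(|A|^{2})}{|B-A|^{2}} \;=\; \frac{\Im(A\bar B)}{|B-A|^{2}},
\]
so I just need $\Im(A\bar B)>0$. Writing $A=|A|e^{i\Arg(A)}$ and $B=|B|e^{i\Arg(B)}$ gives
\[
\Im(A\bar B) \;=\; |A|\,|B|\,\sin\!\bigl(\Arg(A)-\Arg(B)\bigr),
\]
which is positive precisely when $\Arg(A)-\Arg(B)\in(0,\pi)\pmod{2\pi}$. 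Conditions~\eqref{itm:1} and~\eqref{itm:2} say exactly $0<\Arg(A)-\Arg(B)<\pi$, finishing the verification.

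The delicate point to double-check is that hypothesis~\eqref{itm:2}, rather than only~\eqref{itm:1}, is genuinely needed: without an upper bound on $\Arg(A)-\Arg(B)$, the sine above could be negative (for instance if the two arguments lie on opposite sides of the real axis with a gap exceeding $\pi$), and then $x$ would fall into the lower half-plane. Once this is isolated, the proof is a short explicit construction, and no auxiliary machinery beyond elementary complex arithmetic is required.
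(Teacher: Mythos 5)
Your proposal is correct, and it takes a genuinely different route from the paper. The paper's argument splits into cases on whether $B$ lies in $H^+$ or $H^-$, introduces an auxiliary continuous function $f(\theta) = \Im(e^{-i\theta}A)-\Im(e^{-i\theta}B)$, applies the intermediate value theorem to find a rotation angle $\theta_0$ that aligns the imaginary parts of $A$ and $B$, then picks a scaling $k_0$ so that the gap matches $r_2 - r_1$, and finally assembles explicit formulas for $x$ and $w$ from $\theta_0$ and $k_0$. You instead observe that subtracting the two equations forces the unique candidate $w = (B-A)/(r_2-r_1)$ (which is in fact the same $w$ the paper produces, since $k_0 e^{-i\theta_0}=(r_2-r_1)/(B-A)$), so the whole lemma reduces to two direct sign checks: $\Im(w)>0$ falls out of hypothesis (iii), and $\Im(x)>0$ reduces to $\Im(A\bar B)>0$, which is $|A|\,|B|\sin\!\bigl(\Arg A-\Arg B\bigr)>0$ and is exactly what hypotheses (i)--(ii) give you. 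Your version avoids the case split, the continuity/IVT machinery, and the two scaling parameters, and it makes transparent which hypothesis is responsible for which half-plane condition; the only thing it gives up is the paper's more visibly geometric picture of rotating and stretching the pair $(A,B)$. One small point to make explicit if you write this up: $B\neq A$ (so $B-A$ and $w$ are nonzero) is guaranteed by hypothesis (iii), so the division is legitimate.
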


\begin{proof} Consider the following cases.

\smallskip

\noindent Case 1: $B\in H^+$. The point $B$ may be located in either quadrant I, on the imaginary axis, or in quadrant II, as described in Figure \ref{eq:B+regions}. The hypotheses \eqref{itm:1}, \eqref{itm:2}, and \eqref{itm:3} implies that point $A$ is located somewhere in the shaded region of the corresponding point $B$.
\begin{figure}[h!]
\begin{tikzpicture}[scale=1.7][domain=0:4]
\fill[color=gray!50!white] (0,0) -- (.6,.8) -- (-1,.8) -- (-1,-1) -- (-.75,-1) -- cycle;
\fill (.6,.8) circle (0.8 pt) node[above]{$B$};
\draw[thick, dotted] (.6,.8) -- (-1,.8);
\draw[thick, dotted] (0,0) -- (.6,.8);
\draw[thick, dotted] (0,0) -- (-.75,-1);
\draw[<->] (-1,0) -- (0,0) -- (1,0);
\draw[<->] (0,-1) -- (0,0) -- (0,1);
\end{tikzpicture} 
\qquad 
\begin{tikzpicture}[scale=1.7][domain=0:4]
\fill[color=gray!50!white] (0,0) -- (0,.8) -- (-1,.8) -- (-1,-1) -- (0,-1) -- cycle;
\fill (0,.8) circle (0.8 pt) node[right]{$B$};
\draw[thick, dotted] (0,.8) -- (-1,.8);
\draw[thick, dotted] (0,0) -- (0,.8);
\draw[thick, dotted] (0,0) -- (0,-1);
\draw[<->] (-1,0) -- (0,0) -- (1,0);
\draw[<->] (0,-1) -- (0,0) -- (0,1);
\end{tikzpicture}
\qquad
\begin{tikzpicture}[scale=1.7][domain=0:4]
\fill[color=gray!50!white] (0,0) -- (-.6,.8) -- (-1,.8) -- (-1,-1) -- (.75,-1) -- cycle;
\fill (-.6,.8) circle (0.8 pt) node[above]{$B$};
\draw[thick, dotted] (-.6,.8) -- (-1,.8);
\draw[thick, dotted] (0,0) -- (-.6,.8);
\draw[thick, dotted] (0,0) -- (.75,-1);
\draw[<->] (-1,0) -- (0,0) -- (1,0);
\draw[<->] (0,-1) -- (0,0) -- (0,1);
\end{tikzpicture}
\caption{}
\label{eq:B+regions}
\end{figure}
Define the function $f:[0,\Arg(B)]\to \R$ by
\beq
f(\theta):=\Im(e^{-i\theta}A)-\Im(e^{-i\theta}B).
\eeq
Then $f(0)<0$ by \eqref{itm:3}, and $f(\Arg(B))>0$ by \eqref{itm:2}. Thus by continuity, there exist $\theta_0\in(0,\Arg(B))$ such that $f(\theta_0)=0$, which implies that $(e^{-i\theta_0}B-e^{-i\theta_0}A)>0$ by \eqref{itm:1}. Define the function $g:[0,\infty)\to \R$ by
\beq
g(k):=k(e^{-i\theta_0}B-e^{-i\theta_0}A).
\eeq
Notice $g\ge 0$, $g(0)=0$, and $\disp \lim_{k\to +\infty} g(k)=+\infty$. Thus, there exist $k_0>0$ such that $g(k_0)=r_2-r_1$. Let
\beq
x=\frac{1}{2}(k_0 e^{-i\theta_0}B+k_0 e^{-i\theta_0}A -r_1-r_2), \mbox{ and } w=\frac{1}{k_0}e^{i\theta_0}.
\eeq
It follows that $x,w\in H^+$, $(x+r_1)w=A$, and $(x+r_2)w=B$.

\smallskip

\noindent Case 2: $B\in H^-$. Similar to Case 1, the point $B$ may be located in either quadrant III, on the imaginary axis, or in quadrant IV, as described in Figure \eqref{eq:B-regions}. Point $A$ is located somewhere in the shaded region of the corresponding point $B$ by hypotheses \eqref{itm:1}, \eqref{itm:2}, and \eqref{itm:3}.
\begin{figure}[h!]
\begin{tikzpicture}[scale=1.7][domain=0:4]
\fill[color=gray!50!white] (1,-.4) -- (-.6,-.4) -- (-1,-2/3) -- (-1,-1.5) -- (1,-1.5) -- cycle;
\fill (-.59,-.4) circle (0.8 pt) node[above]{$B$};
\draw[thick, dotted] (-.6,-.4) -- (1,-.4);
\draw[thick, dotted] (-1,-2/3) -- (-.6,-.4);
\draw[thick, dotted] (0,0) -- (-.6,-.4);
\draw[<->] (-1,0) -- (0,0) -- (1,0);
\draw[<->] (0,-1.5) -- (0,0) -- (0,.5);
\end{tikzpicture} \qquad 
\begin{tikzpicture}[scale=1.7][domain=0:4]
\fill[color=gray!50!white] (1.5,-.4) -- (0,-.4) -- (0,-1.5) -- (1.5,-1.5) -- cycle;
\fill (0,-.4) circle (0.8 pt) node[left]{$B$};
\draw[thick, dotted] (0,0) -- (0,-1.5);
\draw[thick, dotted] (0,-.4) -- (1.5,-.4);
\draw[<->] (-.5,0) -- (0,0) -- (1.5,0);
\draw[<->] (0,-1.5) -- (0,0) -- (0,.5);
\end{tikzpicture}
\qquad \begin{tikzpicture}[scale=1.7][domain=0:4]
\fill[color=gray!50!white] (1.5,-.4) -- (0.2,-.4) -- (.75,-1.5) -- (1.5,-1.5) -- cycle;
\fill (.2,-.4) circle (0.8 pt) node[left]{$B$};
\draw[thick, dotted] (0,0) -- (.75,-1.5);
\draw[thick, dotted] (.2,-.4) -- (1.5,-.4);
\draw[<->] (-.5,0) -- (0,0) -- (1.5,0);
\draw[<->] (0,-1.5) -- (0,0) -- (0,.5);
\end{tikzpicture}
\caption{}
\label{eq:B-regions}
\end{figure}

Define the function $f:[0,2\pi-\Arg(B)]\to \R$ by
\beq
f(\theta):=\Im(e^{i\theta}A)-\Im(e^{i\theta}B).
\eeq
Then $f(0)<0$ by \eqref{itm:3}, and $f(2\pi-\Arg(B))>0$ by \eqref{itm:2}. Thus by continuity, there exist $\theta_0\in(0,2\pi-\Arg(B))$ such that $f(\theta_0)=0$, which implies that $(e^{i\theta_0}B-e^{i\theta_0}A)<0$ by \eqref{itm:1}. Define the function $g:(-\infty,0]\to \R$ by
\beq
g(k):=k(e^{i\theta_0}B-e^{i\theta_0}A).
\eeq
Then $g\ge 0$, $g(0)=0$, and $\disp \lim_{k\to -\infty} g(k)=+\infty$. Thus, there exist $k_0<0$ such that $g(k_0)=r_2-r_1$. Let
\beq
x=\frac{1}{2}(k_0 e^{i\theta_0}B+k_0 e^{i\theta_0}A -r_1-r_2), \mbox{ and } w=\frac{1}{k_0}e^{-i\theta_0}.
\eeq
It follows that $x,w\in H^+$, $(x+r_1)w=A$, and $(x+r_2)w=B$.
\end{proof}

\begin{lem} \label{lem:neg}
Let $a,b,r_1,r_2,r \in\R$, $a,b\ge 0$, and $r_1 \neq r_2$. Set
\beq \label{eq:neqr}
f(x,w)=((x+r_1)w-a)((x+r_2)w-b), \quad x,w\in\C.
\eeq
Then 
\[
f(x,w)\neq r\ \ \forall\ x,w\in H^+ \quad \mbox{if and only if}\quad r\in [0,ab].
\] 
\end{lem}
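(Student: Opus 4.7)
The strategy is to use Lemma \ref{lem:angle} as a bridge. Setting $A := (x+r_1)w$ and $B := (x+r_2)w$, and assuming (without loss of generality) $r_1 < r_2$, I observe that conditions (i)--(iii) of Lemma \ref{lem:angle} for $(A, B)$ are equivalent to the pair of requirements $\Im A < \Im B$ and $A/B \in H^+$ (the latter coming from $A/B = (x+r_1)/(x+r_2)$ being the image of $x \in H^+$ under a M\"obius transformation with positive determinant $r_2 - r_1$). Moreover, since $H^+ \cdot H^+ = \C \setminus [0, \infty)$, both $A \neq a$ and $B \neq b$. Thus the problem reduces to analyzing the image of $(A, B) \mapsto (A-a)(B-b)$ over the set of admissible pairs.

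For the \emph{if} direction ($r \in [0, ab]$ implies $f(x,w) \neq r$), the case $r = 0$ follows immediately from $A \neq a$ and $B \neq b$. For $r \in (0, ab]$, I argue by contradiction. Write $A - a = \rho_1 e^{i\phi_1}$ and $B - b = \rho_2 e^{i\phi_2}$ with $\rho_j > 0$. Then $(A-a)(B-b) = r > 0$ forces $\rho_1\rho_2 = r$ and $\phi_2 = -\phi_1$; combined with $\Im A < \Im B$ this yields $\sin\phi_1 < 0$. Expanding $A\bar B = ab + (a\rho_2 + b\rho_1)e^{i\phi_1} + r e^{2i\phi_1}$ and using $\Im(A\bar B) > 0$ (together with $\sin\phi_1 < 0$) gives the key inequality $-2r\cos\phi_1 > a\rho_2 + b\rho_1$. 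Applying AM-GM to the right-hand side (with $\rho_1\rho_2 = r$) and squaring produces $r\cos^2\phi_1 > ab$; since $\phi_1 \in (\pi, 2\pi)$ strictly, $\cos^2\phi_1 < 1$, and therefore $r > ab$, the desired contradiction.

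For the \emph{only if} direction, I construct non-real $A, B$ satisfying (i)--(iii) with $(A-a)(B-b) = r$ and then apply Lemma \ref{lem:angle}. For $r > ab$: reverse the above analysis---since $r > ab$ makes $\sqrt{ab/r} < 1$, pick $\phi_1 \in (\pi, 3\pi/2)$ with $\cos\phi_1 < -\sqrt{ab/r}$ and take the AM-GM-equality parameters $\rho_1 = \sqrt{ar/b}$, $\rho_2 = \sqrt{br/a}$ (with obvious modifications when $a$ or $b$ is zero). For $r < 0$: parametrize with $\phi_2 = \pi - \phi_1$, making $(A-a)(B-b)$ negative real with $\rho_1\rho_2 = |r|$; the analogous condition $\Im(A\bar B) > 0$ becomes an inequality involving $b\rho_1 - a\rho_2$, which (unlike the $r > ab$ case) lacks a uniform AM-GM bound, so I split into sub-cases according to the sign of $a - b$. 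In the borderline sub-case $a = b$, the purely imaginary shift $A - a = i\mu$, $B - b = i\nu$ would collapse condition (iii); I repair this by adding small real parts $\epsilon_1 < 0 < \epsilon_2$ constrained by $\epsilon_1\nu + \epsilon_2\mu = 0$ (preserving reality of the product) and choosing $\mu/\nu$ close to $1$ so that $\Im(A\bar B) > 0$.

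The main obstacle is the $r < 0$ portion of the converse: while AM-GM cleanly controls the $r > ab$ case, the sign of $b\rho_1 - a\rho_2$ requires case analysis to handle all $(a, b)$ uniformly. Once the correct parametrization is exhibited in each sub-case, the remaining verification of (i)--(iii) is routine and Lemma \ref{lem:angle} immediately delivers $x, w \in H^+$.
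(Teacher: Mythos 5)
Your proposal is essentially correct and, in the sufficiency (``if'') direction, genuinely different from the paper's argument. Your reformulation of Lemma~\ref{lem:angle}'s hypotheses as ``$\Im A < \Im B$ and $A/B \in H^+$'' is a clean observation that the paper does not make explicit (though it implicitly uses it via inequalities \eqref{ine1} and \eqref{ine2}); note that the equivalence holds because the alternative $\Arg(A)-\Arg(B)\in(-2\pi,-\pi)$ would force $A\in H^+$ and $B\in H^-$, contradicting $\Im A<\Im B$, so no information is lost. For $r\in(0,ab]$ the paper runs a geometric argument: it draws the triangles formed by $A$, $B$, the origin, and their horizontal projections onto the ray $\Arg(w)$, applies the law of sines, and derives $(a+\delta)(b+\epsilon)=\bigl(1-(\sin\theta/\sin\alpha)^2\bigr)\lvert AB\rvert<\lvert AB\rvert=r$. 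Your argument instead writes $A-a=\rho_1 e^{i\phi_1}$, $B-b=\rho_2 e^{-i\phi_1}$, expands $\Im(A\bar B)>0$ into $-2r\cos\phi_1>a\rho_2+b\rho_1$, and hits the right side with AM--GM to get $r\cos^2\phi_1>ab$; this is a tidier, more algebraic route to the same contradiction, and it makes the role of $ab$ and the extremal case transparent. For the necessity (``only if'') direction your structure parallels the paper's (constructive, with a case split for $r<0$), though your split is by $\operatorname{sgn}(a-b)$ rather than the paper's comparison of $a$ against $b+2\sqrt{\lvert r\rvert}$, and your $r>ab$ construction via the AM--GM-equality choice $\rho_1=\sqrt{ar/b}$, $\rho_2=\sqrt{br/a}$ neatly mirrors your sufficiency argument (the paper instead factors $r=a'b'$ with $a'>a$, $b'>b$). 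The one place I would push you to write out the details is the $r<0$ construction, particularly the $a=b$ repair: you need to verify that the choice of $\mu,\nu$ with $\mu\nu<\lvert r\rvert$, $\mu/\nu$ close to $1$, and $\epsilon_1=-\sqrt{\mu(\lvert r\rvert-\mu\nu)/\nu}$ actually satisfies $2\sqrt{\mu\nu(\lvert r\rvert-\mu\nu)}>a(\nu-\mu)$, and your ``obvious modifications when $a$ or $b$ is zero'' in the $r>ab$ case would not be entirely obvious to a reader since the formulas $\sqrt{ar/b}$, $\sqrt{br/a}$ degenerate. These are fillable gaps, not conceptual errors.
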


\begin{proof} Since the factors of $f(x,w)$ in \eqref{eq:neqr} are symmetric, we let $r_1<r_2$. There are three cases to prove necessity. The following is the outline.

Case 1. $r\in (-\infty,0)$, and $a<b+2\sqrt{|r|}$. 

Case 2. $r\in (-\infty,0)$, and $a\ge b+2\sqrt{|r|}$. 

Case 3. $r\in (ab,\infty)$. 

\noindent We show in each case that there exist $x,w\in H^+$ such that $f(x,w)=r$.

\smallskip

\noindent Case 1. Consider $r\in (-\infty,0)$, and $a<b+2\sqrt{|r|}$. Define $g:[0,\pi/2]\to\R$ by
\beq
\barr{rl}
g(\theta):= & \!\!\! \left(\sqrt{|r|} e^{i \theta}+b\right)- \left( \sqrt{|r|}e^{i(\pi-\theta)}+a\right)\\
= & \!\!\! \sqrt{|r|}(2\cos(\theta))-a+b.
\earr
\eeq
The function $g$ is real valued and $g(0)=b+2\sqrt{|r|}-a>0$ by assumption. Thus by continuity, there exists $\theta_0\in (0,\pi/2)$ such that $g(\theta_0)>0$, which implies the following.

\begin{enumerate} [\qquad (a)]
\item \label{itm:1'} $\Im\left(\sqrt{|r|} e^{i\theta_0}+b\right)- \Im\left( \sqrt{|r|}e^{i(\pi-\theta_0)}+a\right)=0$,
\item \label{itm:2'} $\Re\left(\sqrt{|r|} e^{i\theta_0}+b\right)- \Re\left( \sqrt{|r|}e^{i(\pi-\theta_0)}+a\right)>0$, and
\item \label{itm:3'} $\left(\sqrt{|r|} e^{i\theta_0}+b\right),\ \left( \sqrt{|r|}e^{i(\pi-\theta_0)}+a\right)\in H^+$.
\end{enumerate}
By \eqref{itm:1'}, \eqref{itm:2'}, and \eqref{itm:3'},
\begin{equation}
\Arg\left( \sqrt{|r|}e^{i(\pi-\theta_0)}+a\right)-\Arg\left(\sqrt{|r|} e^{i\theta_0}+b\right)>0.
\end{equation}
Define the function $h:(0,1]\to\R$ by
\begin{equation}
h(k):=\Arg\left(k\sqrt{|r|}e^{i(\pi-\theta_0)}+a\right)-\Arg\left(\frac{\sqrt{|r|}}{k} e^{i \theta_0}+b\right).
\end{equation}
The function $h$ is real valued, and $h(1)>0$. Thus by continuity, there exists $k_0<1$ such that
\begin{equation} \label{eq:c1i}
\Arg\left(k_0\sqrt{|r|}e^{i(\pi-\theta_0)}+a\right)-\Arg\left(\frac{\sqrt{|r|}}{k_0} e^{i\theta_0}+b\right)>0,
\end{equation}
such that 
\beq \label{eq:c1ii}
\Im\left(k_0\sqrt{|r|}e^{i(\pi-\theta_0)}+a\right)<\Im\left(\frac{\sqrt{|r|}}{k_0} e^{i\theta_0}+b\right).
\eeq
Let
\begin{equation}
A=k_0\sqrt{|r|}e^{i(\pi-\theta_0)}+a, \quad \mbox{ and } \quad B=\frac{\sqrt{|r|}}{k_0} e^{i\theta_0}+b.
\end{equation}
Then \eqref{eq:c1i} and \eqref{eq:c1ii} satisfies items \eqref{itm:1}, \eqref{itm:2}, and \eqref{itm:3} of Lemma \ref{lem:angle}, hence there exist $x,w\in H^+$ such that $(x+r_1)w=A$ and $(x+r_2)w=B$. Thus,
\begin{equation}
\barr{rl}
f(x,w) \!\!\! & =((x+r_1)w-a)((x+r_2)w-b)\\
& = \disp \left(k_0\sqrt{|r|}e^{i(\pi-\theta_0)}\right)\left(\frac{\sqrt{|r|}}{k_0} e^{i\theta_0}\right) = -|r| = r.
\earr
\end{equation}

\noindent Case 2: We consider $r\in (-\infty,0)$, and $b+2\sqrt{|r|}\le a$. We will only need $b<a+2\sqrt{|r|}$. This is easily seen to be true by adding $2\sqrt{|r|}$ to both sides of $b+2\sqrt{|r|}\le a$, and observing $b< b+4\sqrt{|r|}$. Define the function $g:[0,\pi/2]\to\R$ by
\begin{equation} 
\barr{rl}
g(\theta)  := & \!\!\! \left( \sqrt{|r|}e^{i(2\pi-\theta)}+a\right) - \left(\sqrt{|r|} e^{i(\pi+\theta)}+b\right)\\
 =  & \!\!\! \sqrt{|r|}(2\cos(\theta))+a-b. 
\earr
\end{equation}
Again, $g$ is real valued, and $g(0)=a+2\sqrt{|r|}-b>0$. Thus by continuity, there exists $\theta_0\in (0,\pi/2)$ such that $g(\theta_0)>0$, which implies the following: 
\begin{enumerate}[\qquad (a)]
\item \label{itm:1''} $\Im\left( \sqrt{|r|}e^{i(2\pi-\theta_0)}+a\right) - \Im\left(\sqrt{|r|} e^{i(\pi+\theta_0)}+b\right) =0$,
\item \label{itm:2''} $\Re\left( \sqrt{|r|}e^{i(2\pi-\theta_0)}+a\right) - \Re\left(\sqrt{|r|} e^{i(\pi+\theta_0)}+b\right)>0$,
\item \label{itm:3''} $\left( \sqrt{|r|}e^{i(2\pi-\theta_0)}+a\right),\ \left(\sqrt{|r|} e^{i(\pi+\theta_0)}+b\right)\in H^-$.
\end{enumerate} 
By \eqref{itm:1''}, \eqref{itm:2''}, and \eqref{itm:3''},
\begin{equation}
\Arg\left( \sqrt{|r|}e^{i(2\pi-\theta_0)}+a\right)-\Arg\left(\sqrt{|r|} e^{i(\pi+\theta_0)}+b\right)>0. 
\end{equation}
Define the function $h:[1,\infty)\to \R$ by
\begin{equation}
h(k):=\Arg\left(k\sqrt{|r|}e^{i(2\pi-\theta_0)}+a\right)-\Arg\left(\frac{\sqrt{|r|}}{k} e^{i(\pi+\theta_0)}+b\right).
\end{equation}
The function $h$ is real valued, and $h(1)>0$. Thus by continuity, there exists $k_0>1$ such that
\begin{equation} \label{eq:c2i}
\Arg\left(k_0\sqrt{|r|}e^{i(2\pi-\theta_0)}+a\right)-\Arg\left(\frac{\sqrt{|r|}}{k_0} e^{i(\pi+\theta_0)}+b\right)>0,
\end{equation}
so that
\beq \label{eq:c2ii}
\Im\left(k_0\sqrt{|r|}e^{i(2\pi-\theta_0)}+a\right)<\Im\left(\frac{\sqrt{|r|}}{k_0} e^{i(\pi+\theta_0)}+b\right).
\eeq
Let
\begin{equation}
A=k_0\sqrt{|r|}e^{i(2\pi-\theta_0)}+a, \quad \mbox{ and } \quad B=\frac{\sqrt{|r|}}{k_0} e^{i(\pi+\theta_0)}+b.
\end{equation}
Then \eqref{eq:c2i} and \eqref{eq:c2ii} satisfies items \eqref{itm:1}, \eqref{itm:2}, and \eqref{itm:3} of Lemma \ref{lem:angle}, hence there exist $x,w\in H^+$ such that $(x+r_1)w=A$ and $(x+r_2)w=B$. Thus,
\begin{equation}
\barr{rl}
f(x,w) \!\!\! & =((x+r_1)w-a)((x+r_2)w-b) \\
& = \disp \left(k_0\sqrt{|r|} e^{i(2\pi-\theta_0)}\right) \left(\frac{\sqrt{|r|}}{k_0} e^{i(\pi+\theta_0)}\right) = -|r| = r.
\earr
\end{equation}

\noindent Case 3: We consider $r\in (ab,\infty)$. Since $r>ab$, $r=a'b'$, for some $a'>a$, and $b'>b$. Define the function $g:[\pi/2,\pi]\to [a-a',a]\times [b-b',b]$ by
\begin{equation}
g(\theta):=\left( \Re(a'e^{-i\theta})+a , \,\Re(b'e^{i\theta})+b \right).
\end{equation}
Since $a-a',b-b'<0$, $\,g(\pi)=(a-a',b-b')$ has negative coordinates. By continuity, there exists $\theta_0\in (\pi/2,\pi)$ such that $g(\theta_0)$ has negative coordinates, which implies that $a'e^{-i\theta_0}+a$ is in quadrant three, and $b'e^{i\theta_0}+b$ is in quadrant two. Let
\begin{equation}
A=a' e^{-i\theta_0}+a, \quad \text{ and } \quad B=b' e^{i\theta_0}+b.
\end{equation}
Again, by Lemma \ref{lem:angle}, there exist $x,w\in H^+$ such that $(x+r_1)w=A$, and $(x+r_2)w=B$. Thus, 
\begin{equation}
f(x,w)=((x+r_1)w-a)((x+r_2)w-b)=\left(a' e^{-\theta_0 i}\right) \left(b' e^{\theta_0 i}\right) = a'b'=r. 
\end{equation}

To prove sufficiency, first consider $r\in (0,ab]$. By way of contradiction, assume there exist $x,w\in H^+$ such that $((x+r_1)w-a)((x+r_2)w-b)=r$. Let $A=((x+r_1)w-a)$, $B=((x+r_2)w-b)$. Since $x+r_1,x+r_2\in H^+$, the rotation by $\Arg(w)\in (0,\pi)$ and the shifts to the left by $a,b>0$ restrict the location of $A$ and $B$ considerably. Indeed, since $AB$ is a positive real number, $\Arg(A)+\Arg(B)=0$ (mod $2\pi$). In particular, as $r_1<r_2$, $B$ must be in $H^+$, which implies
\beq\label{ine1}
0<\Arg(w)<\Arg((x+r_2)w) < \Arg((x+r_2)w-b) < \pi,
\eeq
and $A$ must be in $H^-$, which implies
\beq\label{ine2}
\pi < \Arg((x+r_1)w-a) < \Arg((x+r_1)w)<\pi-\Arg(w)<2\pi.
\eeq
The following figure illustrates inequalities \eqref{ine1} and \eqref{ine2}.
\begin{equation} \nonumber \begin{tikzpicture}[scale=4]
\draw[<->] (-1.7,0) -- (0,0) node[below right] {Origin} -- (.7,0) node[above] {Real Line} -- (.9,0);
\draw[<->] (240:.8) -- (60:1);
\draw[-] (0:.3) arc(0:30:.3) node[right] {$\Arg(w)$};
\draw[->] (30:.3) arc(30:60:.3);
\fill (0,0) circle (0.4 pt);
\draw[dotted] (60:.8) -- (.2500,.6928) node[below] {$\epsilon$} -- (.1000,.6928) node[above] {$(x+r_2)w$};
\draw[-] (.1000,.6928) -- (-.4000,.6928) node[below] {$b$} -- (-1.5,.6928) node[above] {$B=(x+r_2)w-b$};
\draw[dotted] (0,0) -- (.1000,.6928); \draw[-] (0,0) -- (-1.5,.6928);
\fill (60:.8) circle (0.4 pt); \fill (.1,.6928) circle (0.4 pt); \fill (-1.5,.6928) circle (0.4 pt);
\draw[-] (-155.21:1.074) node[below left] {$A=(x+r_1)w-a$} -- (-.75,-.4502) node[above] {$a$} -- (-.5599,-.4502);
\draw[dotted] (-.5599,-.4502) node[below] {$(x+r_1)w$} -- (-.38,-.4502) node[above] {$\delta$} -- (-.2599,-.4502);
\draw[dotted] (0,0) -- (-.5599,-.4502); \draw[-] (0,0) -- (-155.21:1.074);
\fill (-155.21:1.074) circle (0.4 pt); \fill (-.5599,-.4502) circle (0.4 pt); \fill (-.2599,-.4502) circle (0.4 pt);
\end{tikzpicture}\end{equation}
We let $\epsilon$ and $\delta$ be the horizontal distance from $(x+r_1)w$ and $(x+r_2)w$ to the line formed by $\Arg(w)$. In fact, $\delta=\frac{\Im(x+r_1)}{\sin(Arg(w))}$, and $\epsilon=\frac{\Im(x+r_2)}{\sin(Arg(w))}$, so that $\delta=\epsilon>0$.  We redraw the picture with different labels and examine the points geometrically.  
\begin{equation} \nonumber \begin{tikzpicture}[scale=4]
\draw[<->] (-1.7,0) -- (0,0) -- (.7,0) node[above] {Real Line} -- (1,0);
\draw[<->] (240:.8) -- (60:1);
\fill (0,0) circle (0.4 pt);
\draw[-] (0:.15) arc(0:30:.15) node[right] {$\alpha$}; \draw[-] (30:.15) arc(30:60:.15);
\draw[-] (60:.2) arc(60:115:.2) node[above] {$\pi-(\alpha+\theta)$}; \draw[-] (115:.2) arc(115:155.21:.2);
\draw[-] (155.21:.17) arc(155.21:167:.17) node[left] {$\theta$}; \draw[-] (167:.17) arc(167:180:.17);
\draw[-] (-180:.17) arc(-180:-167:.17) node[left] {$\theta$\,}; \draw[-] (-167:.17) arc(-167:-155.21:.17);
\draw[-] (-155.21:.2) arc(-155.21:-135:.2) node[below left] {$\alpha-\theta$}; \draw[-] (-135:.2) arc(-135:-120:.2);
\draw[-] (-120:.17) arc(-120:-60:.17) node[below right] {$\pi-\alpha$}; \draw[-] (-60:.17) arc(-60:0:.17);
\draw[-] +(60:.7) arc(240:210:.1) node[left] {$\alpha$}; \draw[-] +(60:.7) arc(240:180:.1);
\draw[-] +(-120:.4198) arc(60:120:.1) node[left] {$\pi-\alpha$}; \draw[-] +(-120:.4198) arc(60:180:.1);
\draw[-] (60:.8) -- (-.4000,.6928) node[above] {$b+\epsilon$} -- (-1.5,.6928);
\draw[-] (0,0) -- (-.75,.3464) node[below left] {$|B|$} -- (-1.5,.6928);
\fill (60:.8) circle (0.4 pt); \fill (-1.5,.6928) circle (0.4 pt);
\draw[-] (-155.21:1.074) -- (-.5599,-.4502) node[below] {$a+\delta$} -- (-.2599,-.4502);
\draw[-] (0,0) -- (-155.21:.7) node[above left] {$|A|$} -- (-155.21:1.074);
\fill (-155.21:1.074) circle (0.4 pt); \fill (-.2599,-.4502) circle (0.4 pt);
\end{tikzpicture}\end{equation}
The inequalities $\alpha-\theta>0$ and $\pi-(\alpha+\theta)>0$ imply
$0<\theta<\alpha<\pi-\theta<\pi$, so that
\[
\sin(\theta)< \sin(\alpha),
\]
since $\sin(\theta)=\sin(\pi-\theta)$. Thus,
\beq
0<\left( \frac{\sin(\theta)}{\sin(\alpha)}\right)^2< 1,
\eeq
and the law of sines yield that
\begin{equation}
\barr{rl}
(a+\delta)(b+\epsilon) \! \! & \disp = \! \;\; \frac{|A|\sin(\alpha-\theta)}{\sin(\pi-\alpha)} \cdot \frac{|B|\sin(\pi-(\alpha+\theta))}{\sin(\alpha)}\\ 
& =\! \;\; \disp \left(1\! -\! \left(\frac{\sin(\theta)}{\sin(\alpha)} \right)^2\right)\!|AB|<|AB|.
\earr
\end{equation}
Hence we have the contradiction that
\begin{equation}
ab<(a+\delta)(b+\epsilon)< |AB|=r.
\end{equation} 
To finish the proof, consider $r=0$. By way of contradiction, suppose there are $x,w\in H^+$ such that
\[
((x+r_1)w-a)((x+r_2)w-b)=0.
\]
Thus, $(x+r_1)w=a$, or $(x+r_2)w=b$. However, neither of these can hold, since the product of any two complex numbers in $H^+$ cannot be a non negative real number. 
\end{proof}

\begin{thm} \label{thm:quadop}
Let $a,b\ge 0$, $r_1,r_2,R\in\R$, and $r_1\not=r_2$. Then, 
\begin{equation} \nonumber
R\in[0,ab]
\end{equation}
if and only if 
\begin{equation} \nonumber
T:=(x+r_1)(x+r_2)D^2+\left(b(x+r_1)+a(x+r_2)\right)D+R
\end{equation} 
(where $D:=\frac{d}{dx}$), is hyperbolicity preserving.
\end{thm}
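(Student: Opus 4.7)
The plan is to apply Theorem \ref{thm:bra-hyp1} in its contrapositive stability form. By that theorem, $T$ preserves hyperbolicity if and only if the symbol
\[
S(x,w) := \sum_{k=0}^{2} Q_k(x)(-w)^k = (x+r_1)(x+r_2)\,w^2 - \bigl(b(x+r_1)+a(x+r_2)\bigr)w + R
\]
is nonzero for every $(x,w) \in H^+\times H^+$. So the whole theorem reduces to an algebraic reformulation of this nonvanishing condition in a form where Lemma \ref{lem:neg} can be invoked.

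The key calculation, which I would perform first, is to notice that the symbol factors suggestively. Expanding
\[
\bigl((x+r_1)w - a\bigr)\bigl((x+r_2)w - b\bigr) = (x+r_1)(x+r_2)w^2 - \bigl(b(x+r_1)+a(x+r_2)\bigr)w + ab,
\]
we see that with $f(x,w)$ as defined in Lemma \ref{lem:neg},
\[
S(x,w) = f(x,w) - (ab - R).
\]
Therefore $S(x,w) \neq 0$ for all $x,w \in H^+$ if and only if $f(x,w) \neq ab - R$ for all $x,w \in H^+$.

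Once this identification is in place, Lemma \ref{lem:neg} (with the roles of $a,b,r_1,r_2$ unchanged and $r$ replaced by $ab-R$) finishes the argument immediately: its conclusion states that $f(x,w)$ avoids the value $ab - R$ on $H^+\times H^+$ precisely when $ab - R \in [0, ab]$, which, since $a,b\ge 0$, is equivalent to $R \in [0, ab]$. Combining this with the equivalence from Theorem \ref{thm:bra-hyp1} gives both directions of the theorem at once.

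There is essentially no obstacle beyond the factoring step; the heavy lifting has been done in Lemmas \ref{lem:angle} and \ref{lem:neg}. The only minor items to check along the way are that the hypothesis $r_1 \neq r_2$ of Lemma \ref{lem:neg} is satisfied (it is, by assumption) and that $a, b \ge 0$ ensures the translation $ab - R \in [0,ab] \iff R \in [0,ab]$ makes sense. Thus the proof will be a short chain: apply Theorem \ref{thm:bra-hyp1}, rewrite $S(x,w)$ via the factorization above, and invoke Lemma \ref{lem:neg}.
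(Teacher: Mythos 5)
Your proposal is correct and follows essentially the same route as the paper: apply Theorem \ref{thm:bra-hyp1} to reduce to nonvanishing of the symbol, factor the symbol as $((x+r_1)w-a)((x+r_2)w-b)-(ab-R)$, and invoke Lemma \ref{lem:neg}. The only cosmetic difference is that you run the equivalences in a single biconditional chain, whereas the paper argues the two directions separately by contradiction.
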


\begin{proof}
($\Rightarrow$) Assume $R\in [0,ab]$. By Theorem \ref{thm:bra-hyp1}, it suffices to show for every $x,w\in H^+$,
\begin{equation} \label{eq:qua-hyp1}
(x+r_1)(x+r_2)w^2-\left(b(x+r_1)+a(x+r_2)\right)w+R\not=0.
\end{equation}
We assume on the contrary that \eqref{eq:qua-hyp1} is false for some $x,w\in H^+$. We factor \eqref{eq:qua-hyp1} to attain
\begin{equation}
((x+r_1)w-a)((x+r_2)w-b)=ab-R,
\end{equation}
which is impossible by Lemma \ref{lem:neg}, a contradiction. 

\smallskip

($\Leftarrow$) Suppose $T$ is hyperbolicity preserving. By Theorem \ref{thm:bra-hyp1}, for every $x,w\in H^+$,
\beq \label{eq:qua-hyp1'}
(x+r_1)(x+r_2)w^2-\left(b(x+r_1)+a(x+r_2)\right)w+R\not=0.
\eeq
We factor \eqref{eq:qua-hyp1'} to attain
\beq
((x+r_1)w-a)((x+r_2)w-b)\not=ab-R,\ \ \ \forall\ x,w\in H^+,
\eeq
which implies that $R\in [0,ab]$ by Lemma \ref{lem:neg}. 
\end{proof}

\begin{thm}\label{thm:quad-op}
For $c_i,r_j\in\R$, $i=0,1,2$, $j=1,2,3$, $c_2\neq 0$, $r_1\neq r_2$, let $Q_0(x)=c_0$, $Q_1(x)=c_1(x-r_3)$, $Q_2(x)=c_2(x-r_1)(x-r_2)$. Then 
\[
0 \le c_1^2 \left (\frac{(r_1-r_3)(r_3-r_2)}{(r_2-r_1)^2}\right)-c_0c_2,
\]
and $c_0,c_1,c_2$ are of the same sign if and only if
\[
T:=Q_2(x)D^2+Q_1(x)D+Q_0(x)
\]
(where $ D:=\frac{d}{dx}$), preserves hyperbolicity. 
\end{thm}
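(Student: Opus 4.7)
The plan is to reduce the claim to Theorem~\ref{thm:quadop} by normalization followed by a reparametrization of the first-order coefficient. Since $f \in \R[x]$ is hyperbolic iff $-f$ is, $T$ is hyperbolicity preserving iff $-T$ is, and both the displayed inequality and the same-sign condition are invariant under $(c_0,c_1,c_2) \mapsto (-c_0,-c_1,-c_2)$. I would therefore assume without loss of generality that $c_2 > 0$, and then divide $T$ through by $c_2$ (which does not affect the HP property) to reduce to $c_2 = 1$; setting $\rho_j = -r_j$ puts $Q_2$ in the form $(x+\rho_1)(x+\rho_2)$ needed to apply Theorem~\ref{thm:quadop}.

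The central algebraic step is to express $c_1(x-r_3)$ as $b(x-r_1)+a(x-r_2)$ for real $a,b$. Matching coefficients yields the unique solution
\[
a \;=\; c_1 \cdot \frac{r_3-r_1}{r_2-r_1}, \qquad b \;=\; c_1 \cdot \frac{r_2-r_3}{r_2-r_1}, \qquad ab \;=\; c_1^2 \cdot \frac{(r_1-r_3)(r_3-r_2)}{(r_2-r_1)^2}.
\]
When $a,b \ge 0$, Theorem~\ref{thm:quadop} applies directly with $R = c_0$ and gives that $T$ preserves hyperbolicity if and only if $c_0 \in [0,ab]$. Unpacking this yields $c_0 \ge 0$ together with $c_0 c_2 \le c_1^2(r_1-r_3)(r_3-r_2)/(r_2-r_1)^2$; combining with $c_1 = a+b \ge 0$ and $c_2 = 1 > 0$ reproduces exactly the inequality and the same-sign condition of the statement.

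For the remaining regime, where the computed $a$ or $b$ is negative, I would invoke Lemma~\ref{thm:bra-int} to rule out hyperbolicity preservation. If $ab < 0$ then $r_3$ lies strictly outside the closed interval with endpoints $r_1,r_2$, so $Q_1$ and $Q_2$ have non-interlacing zeros and $Q_1 \not\ll Q_2$. If instead $ab \ge 0$ but $a$ or $b$ is negative, one checks that $c_1 \le 0$ (and strictly negative whenever not both vanish), so $c_1$ and $c_2 = 1$ have opposite signs and the sign clause of Definition~\ref{def:proper} rules out $Q_1 \ll Q_2$. In each sub-case Lemma~\ref{thm:bra-int} forces $T$ not to preserve hyperbolicity, and the same-sign condition in the statement fails, so both sides of the equivalence are false.

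The main technical obstacle I anticipate is the bookkeeping for the degenerate cases: the formulas for $a,b$ collapse when $c_1 = 0$, and the boundary cases $r_3 \in \{r_1,r_2\}$ make $ab = 0$. These are handled by applying Theorem~\ref{thm:quadop} with one of $a$ or $b$ set to zero; the inequality then reduces to $c_0 c_2 \le 0$, which together with the same-sign clause forces $c_0 = 0$ and collapses $T$ to a manifestly HP operator (a product of multiplications by hyperbolic polynomials and derivatives), confirming the equivalence in the degenerate regime.
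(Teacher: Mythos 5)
Your proposal follows essentially the same route as the paper: normalize $c_2$, decompose $Q_1(x)=c_1(x-r_3)=b(x-r_1)+a(x-r_2)$, and invoke Theorem~\ref{thm:quadop}. The case organization differs slightly --- you split on whether $a,b\ge 0$, whereas the paper proves the two implications separately, in each direction deducing $a,b\ge 0$ from the available hypotheses (Lemma~\ref{thm:bra-int} in one direction, the sign of $(r_1-r_3)(r_3-r_2)$ forced by the inequality in the other) --- but the engine is identical, so this is not a genuinely different proof.

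One small slip worth fixing: in the sub-case $ab<0$ you assert that ``the same-sign condition in the statement fails.'' That need not be so. Take $c_0=c_1=c_2=1$, $r_1=0$, $r_2=1$, $r_3=2$: all three coefficients are positive, yet $a=2$, $b=-1$, so $ab<0$. What fails there is the \emph{inequality}: with $c_0c_2\ge 0$ and
\[
c_1^2\,\frac{(r_1-r_3)(r_3-r_2)}{(r_2-r_1)^2}<0,
\]
the quantity $c_1^2(r_1-r_3)(r_3-r_2)/(r_2-r_1)^2-c_0c_2$ is strictly negative. Your stated conclusion --- that both sides of the equivalence are false in this regime --- is still correct, but the reason given should be amended: when $ab<0$ it is the displayed inequality (not the sign clause) that fails whenever the signs happen to agree. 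Everything else, including your treatment of the degenerate boundary cases $c_1=0$ and $r_3\in\{r_1,r_2\}$, is sound.
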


\begin{proof}
To prove sufficiency, if $T$ preserves hyperbolicity, then by Lemma \ref{thm:bra-int}, $c_i$, $i=0,1,2$ are of the same sign and the zeros of $Q_2$ and $Q_1$ interlace. Since
{\allowdisplaybreaks
\begin{align}
T = & \; c_2 \lr{\!(x-r_1)(x-r_2)D^2+ \frac{c_1}{c_2}(x-r_3) D+ \frac{c_0}{c_2}} \label{intlac1} \\
 = & \; c_2 \biggl((x-r_1)(x-r_2)D^2 \nonumber \\
& \qquad + \frac{c_1}{c_2}\!\br{\!\frac{(r_1-r_3)}{(r_1-r_2)} (x-r_2)+\frac{(r_3-r_2)}{(r_1-r_2)}(x-r_1)\!}\!D+ \frac{c_0}{c_2}\biggr), \label{intlac2}
\end{align}}
then by Theorem \ref{thm:quadop}, 
\beq
\frac{c_0}{c_2}\in\br{0, \; \lr{\frac{c_1}{c_2}}^2\frac{(r_1-r_3)(r_3-r_2)}{(r_1-r_2)^2}},
\eeq
and so
\beq
0 \le c_1^2 \left (\frac{(r_1-r_3)(r_3-r_2)}{(r_2-r_1)^2}\right)-c_0c_2.
\eeq
To prove necessity, suppose $c_i$, $i=0,1,2$ are of the same sign, and 
\beq \label{qosuff}
0 \le c_1^2 \left (\frac{(r_1-r_3)(r_3-r_2)}{(r_2-r_1)^2}\right)-c_0c_2.
\eeq
We want to conclude that
\beq \label{qosuffpos}
\frac{c_1}{c_2}\frac{(r_1-r_3)}{(r_1-r_2)}, \frac{c_1}{c_2}\frac{(r_3-r_2)}{(r_1-r_2)}\ge 0.
\eeq
To this end, if $c_1=0$, then \eqref{qosuffpos} holds immediately. Suppose $c_1\neq 0$, and that $r_1<r_2$. Then \eqref{qosuff} implies $0\le(r_1-r_3)(r_3-r_2)$, and we conclude that $r_1\le r_3\le r_2$ (i.e., $r_3<r_1<r_2$ cannot hold, since it implies $(r_1-r_3)(r_3-r_2)<0$, and also if $r_1<r_2<r_3$, then $(r_1-r_3)(r_3-r_2)<0$), and hence, \eqref{qosuffpos} holds. By symmetry, the same conclusion is true if $r_2<r_1$. Thus by Theorem \ref{thm:quadop}, $T$ preserves hyperbolicity.
\end{proof}

The equality of \eqref{intlac1} and \eqref{intlac2} uses a fact established in Fisk's polynomial book \cite[p.\; 13, Lemma 1.20]{fisk08}, although because our case is easy to verify, we do not need its full strength. For the sake of completeness, we state the result seen in Fisk's book.

\begin{lem} [Fisk {\cite[p.\; 13, Lemma 1.20]{fisk08}}]
Assume that $f$ is a polynomial of degree $n$, with positive leading coefficient, and with real zeros $\set{a_1,\ldots, a_n}$. Suppose that $g$ is a polynomial with positive leading coefficient. If $g$ has degree $n-1$, and we write
\[
g(x)=c_1 \frac{f(x)}{x-a_1}+\ldots+c_n \frac{f(x)}{x-a_n},
\]
then $f$ and $g$ have interlacing zeros if and only if all $c_i\ge 0$ for $i=1,2,\ldots,n$. 
\end{lem}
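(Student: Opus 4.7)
The approach is to evaluate the given partial-fraction representation at $x=a_i$ to recover each $c_i$, and then to translate the interlacing condition into an alternating sign pattern of $g$ at the zeros of $f$. After ordering $a_1<a_2<\cdots<a_n$ (distinctness is implicit in the partial-fraction form), every summand except the $i$-th vanishes at $x=a_i$, giving
\[
g(a_i)=c_i\prod_{j\neq i}(a_i-a_j)=c_i f'(a_i).
\]
Counting the negative factors in $\prod_{j\neq i}(a_i-a_j)$ shows $\operatorname{sgn}(f'(a_i))=(-1)^{n-i}$. Hence $c_i\ge 0$ if and only if $(-1)^{n-i}g(a_i)\ge 0$, and the entire lemma reduces to showing that this alternating sign pattern of $g$ on $\{a_1,\dots,a_n\}$ is equivalent to the interlacing of the zeros of $f$ and $g$.

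For the ``all $c_i\ge 0 \Rightarrow$ interlacing'' direction, the weak sign alternation of $g(a_i)$ together with the intermediate value theorem deposits at least one zero of $g$ in each closed interval $[a_i,a_{i+1}]$ for $i=1,\dots,n-1$. That produces $n-1$ zeros of $g$ interlaced with the $a_i$, and since $\deg g=n-1$ these are all of them, so $f$ and $g$ have interlacing zeros. Conversely, assuming interlacing, write the zeros of $g$ as $b_1\le\cdots\le b_{n-1}$ with $a_i\le b_i\le a_{i+1}$; then for each $i$, exactly $n-i$ of the factors $(a_i-b_k)$ are nonpositive (those with $k\ge i$), so because $g$ has positive leading coefficient, $\operatorname{sgn}(g(a_i))=(-1)^{n-i}$. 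This matches $\operatorname{sgn}(f'(a_i))$ and forces each $c_i\ge 0$ via the identity $c_i=g(a_i)/f'(a_i)$.

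The only delicate point is the boundary case when an $a_i$ coincides with some $b_k$: then $g(a_i)=0$ and the formula $c_i=g(a_i)/f'(a_i)=0$ still gives $c_i\ge 0$, while the weak inequalities in the interlacing definition accommodate the equality in exactly the right way. I expect that checking these degenerate coincidences cleanly is the main (and essentially the only) obstacle; the core argument is a one-line parity count combined with the intermediate value theorem.
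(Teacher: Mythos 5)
The paper states this lemma without proof --- it is cited directly from Fisk's book and used as a black box --- so there is no internal argument to compare against; your proof supplies one. Your approach is the standard one and is correct in outline: evaluating the partial-fraction expansion at $x=a_i$ gives $c_i = g(a_i)/f'(a_i)$, a parity count of the factors gives $\operatorname{sgn} f'(a_i) = (-1)^{n-i}$ once the $a_i$ are ordered increasingly, and the equivalence reduces to the sign-alternation condition $(-1)^{n-i}g(a_i)\ge 0$. The backward direction (interlacing $\Rightarrow$ all $c_i\ge 0$) is complete as written, including the degenerate coincidences, since $c_i=g(a_i)/f'(a_i)$ with $f'(a_i)\ne 0$ handles $g(a_i)=0$ automatically.

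The forward direction is complete only in the generic case where every $c_i>0$: then $g(a_i)\ne 0$ for all $i$, the sign alternation is strict, IVT places one zero in each open interval $(a_i,a_{i+1})$, and the degree count finishes it. When some $c_i=0$ the statement ``at least one zero of $g$ in each closed interval $[a_i,a_{i+1}]$, hence $n-1$ zeros total'' does not follow directly, because a zero of $g$ at an interior $a_i$ lies in two consecutive closed intervals and may be double-counted; the degree bound alone then no longer forces interlacing (one must also rule out zeros escaping past $a_1$ or $a_n$). You correctly flag this as the only delicate point, but the phrase ``the weak inequalities \ldots accommodate the equality in exactly the right way'' hides the actual work. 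The clean fix is to note that $c_i=0$ is equivalent to $(x-a_i)\mid g$ (every term $f(x)/(x-a_j)$ with $j\ne i$ already carries that factor, and $f'(a_i)\ne 0$), set $S=\{i:c_i=0\}$, write $g=\prod_{i\in S}(x-a_i)\,\tilde g$ and $f=\prod_{i\in S}(x-a_i)\,\tilde f$, apply the strict case to the reduced pair $\tilde f,\tilde g$, and then verify that reinserting the common zeros $a_i$, $i\in S$, into both sorted zero lists preserves the interlacing pattern. With that supplement the proof is complete.
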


We now remove the condition of $Q_2$ having distinct zeros. We begin with a lemma that is analogous to Lemma \ref{lem:neg}.

\begin{lem} \label{lem:neg'}
Let $a,r \in\R$, $a\ge 0$. Set 
\[
f(x):=x^2-ax+r, \quad x\in\C.
\]
Then 
\[
f(x)\neq 0\ \ \forall \; x\in\C - [0,\infty) \quad \mbox{if and only if} \quad r\in \br{0,\frac{a^2}{4}}.
\] 
\end{lem}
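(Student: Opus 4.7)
The plan is to observe that the condition ``$f(x) \neq 0$ for all $x \in \C - [0, \infty)$'' says exactly that every zero of $f$ lies in $[0, \infty)$. Since $f$ is a real quadratic, the problem reduces to elementary analysis via the quadratic formula and Vieta's formulas; no heavy two-variable stable polynomial machinery (as in the preceding Lemma \ref{lem:neg}) is needed here.

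For the forward direction, I would assume every root of $f$ lies in $[0, \infty)$. Non-real roots of a real polynomial occur in complex conjugate pairs, neither member of which can lie in $[0, \infty)$; hence both roots of $f$ must be real. This forces the discriminant to be non-negative, so $a^2 - 4r \ge 0$, i.e., $r \le a^2/4$. Writing $\alpha_1, \alpha_2 \ge 0$ for the two roots, Vieta gives $\alpha_1 \alpha_2 = r$, so $r \ge 0$; together, $r \in [0, a^2/4]$.

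For the converse, assume $0 \le r \le a^2/4$. The discriminant $a^2 - 4r$ is non-negative, so the quadratic formula gives two real roots $\tfrac{a \pm \sqrt{a^2 - 4r}}{2}$. Since $a \ge 0$ by hypothesis and $0 \le \sqrt{a^2 - 4r} \le \sqrt{a^2} = a$, both roots lie in $[0,\infty)$. Hence no $x \in \C - [0, \infty)$ is a zero of $f$.

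I do not foresee any substantive obstacle: this is the classical Vieta characterization of real quadratics with both roots in $[0, \infty)$, and once the complex/real dichotomy is handled in a single sentence (non-real roots come in conjugate pairs), the remaining bookkeeping is routine. The principal role of this lemma is to serve as the degenerate-case analogue of Lemma \ref{lem:neg}, allowing the subsequent characterization of quadratic hyperbolicity preservers to be extended to the case in which $Q_2$ has a repeated zero.
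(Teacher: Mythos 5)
Your proof is correct and takes essentially the same approach as the paper: analyze the location of the roots of the real quadratic $f$ via the quadratic formula, splitting on the sign of the discriminant and the sign of $r$. The only cosmetic difference is that you argue necessity directly (conjugate pairs plus Vieta), while the paper argues it by contrapositive in two cases ($r<0$ gives a negative real root, $r>a^2/4$ gives non-real roots); the content is identical.
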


\begin{proof}
The zeros of $f$ are $\frac{1}{2}\lr{a\pm\sqrt{a^2-4r}}$. There are two cases to prove necessity. 

Case 1. If $r<0$, then one of the zeros of $f$ is a negative real number, thus there exist $x\in\C - [0,\infty)$ such that $f(x)=0$.

Case 2. If $r>a^2/4$, then $f$ has two imaginary zeros, thus the zeros of $f$ are in $\C - [0,\infty)$.

To prove sufficiency, suppose $0\le r \le a^2/4$. Then $f$ has two non-negative zeros, so that $f$ never vanishes in $\C - [0,\infty)$.
\end{proof}

\begin{thm} \label{thm:mltrtquadop}
Let $a\ge 0$, $r,R\in\R$. Then, 
\begin{equation} \nonumber
R\in \br{0,\frac{a^2}{4}}
\end{equation}
if and only if 
\begin{equation} \nonumber
T:=(x+r)^2 D^2+ a(x+r)D+R
\end{equation} 
is hyperbolicity preserving.
\end{thm}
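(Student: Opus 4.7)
The plan is to follow the template of the proof of Theorem \ref{thm:quadop}, with Lemma \ref{lem:neg'} taking the role that Lemma \ref{lem:neg} played there, so as to handle the case where $Q_2(x)=(x+r)^2$ has a repeated zero. First, by Theorem \ref{thm:bra-hyp1}, $T$ preserves hyperbolicity if and only if
\[
(x+r)^2 w^2 - a(x+r)w + R \neq 0 \quad \mbox{for all } x,w \in H^+.
\]
The crucial simplification relative to the distinct-roots case is that, under the single substitution $y := (x+r)w$, this two-variable expression collapses to the one-variable polynomial $y^2 - ay + R$ appearing in Lemma \ref{lem:neg'}. Consequently the heavy two-point parametrization of Lemma \ref{lem:angle} is not needed.

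The next step is to identify the image $\Omega := \{(x+r)w : x,w \in H^+\}$. Since $r \in \R$, translation by $r$ preserves $H^+$, and so $\Omega = \{uw : u,w \in H^+\}$. If $u = |u|e^{i\alpha}$ and $w = |w|e^{i\beta}$ with $\alpha,\beta \in (0,\pi)$, then $\Arg(uw) = \alpha + \beta \in (0,2\pi)$, so $uw \notin [0,\infty)$. Conversely, any $y$ with $\Arg(y) \in (0,2\pi)$ is realized by choosing $\alpha = \beta = \Arg(y)/2 \in (0,\pi)$ and $|u|=|w|=\sqrt{|y|}$, and then setting $x := u - r$ (which lies in $H^+$ because $\Im(x) = \Im(u) > 0$). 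Hence $\Omega = \C \setminus [0,\infty)$.

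Combining these two observations, $T$ preserves hyperbolicity if and only if $y^2 - ay + R \neq 0$ for every $y \in \C \setminus [0,\infty)$, which by Lemma \ref{lem:neg'} is equivalent to $R \in [0,a^2/4]$. I do not anticipate a genuine obstacle: the delicate geometry has already been encapsulated in Lemma \ref{lem:neg'}, and the surjectivity of $(x,w) \mapsto (x+r)w$ onto $\C \setminus [0,\infty)$ is a one-line polar-coordinate argument. The entire proof should therefore parallel, but be strictly shorter than, that of Theorem \ref{thm:quadop}.
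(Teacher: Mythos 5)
Your proposal is correct and follows essentially the same route as the paper's proof: apply Theorem \ref{thm:bra-hyp1}, substitute $z=(x+r)w$, and invoke Lemma \ref{lem:neg'}. The only difference is that you explicitly verify that the image $\{(x+r)w : x,w\in H^+\}$ equals $\C\setminus[0,\infty)$, a fact the paper uses implicitly in both directions of the equivalence.
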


\begin{proof}
($\Rightarrow$) Assume $R\in [0,a^2/4]$. By Theorem \ref{thm:bra-hyp1}, it suffices to show for every $x,w\in H^+$,
\begin{equation} \label{eq:qua-hyp1''}
(x+r)^2w^2- a(x+r)w+R \not= 0.
\end{equation}
We assume on the contrary that \eqref{eq:qua-hyp1''} is false for some $x,w\in H^+$. Let $z=(x+r)w$ in \eqref{eq:qua-hyp1''}, so that $z\in \C -[0,\infty)$, and
\begin{equation}
z^2-a z +R=0.
\end{equation}
This is impossible by Lemma \ref{lem:neg'}, a contradiction. 

\smallskip

\noindent ($\Leftarrow$) Suppose $T$ is hyperbolicity preserving. By Theorem \ref{thm:bra-hyp1}, for every $x,w\in H^+$,
\begin{equation} \label{eq:qua-hyp1''''}
(x+r)^2 w^2-a(x+r)w+R\not=0.
\end{equation}
Let $z=(x+r)w$ in \eqref{eq:qua-hyp1''''}, so that $z\in \C -[0,\infty)$, and
\begin{equation}
z^2-az + R\not=0,\ \ \ \forall\ z\in \C-[0,\infty),
\end{equation}
which implies that $R\in [0,a^2/4]$ by Lemma \ref{lem:neg'}. 
\end{proof}

The analogous statement of Theorem \ref{thm:quad-op} is the following, and its proof follows \emph{mutatis mutandis}, from the proof of Theorem \ref{thm:quad-op}.

\begin{thm}\label{thm:quad-op'}
For $r,c_i\in\R$, $i=0,1,2$, $c_2\neq 0$, let $Q_0(x)=c_0$, $Q_1(x)=c_1(x-r)$, $Q_2(x)=c_2(x-r)^2$. Then  
\begin{equation} \nonumber
0 \le c_1^2 \left( \frac{1}{4} \right) - c_0c_2
\end{equation}
and $c_0,c_1,c_2$ are of the same sign, if and only if 
\begin{equation} \nonumber
T=Q_2(x)D^2+Q_1(x)D+Q_0(x)
\end{equation}
preserves hyperbolicity. 
\end{thm}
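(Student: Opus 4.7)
The plan is to mirror the proof of Theorem~\ref{thm:quad-op}, with Theorem~\ref{thm:mltrtquadop} in the role played there by Theorem~\ref{thm:quadop}. The key structural simplification is that because $Q_2(x)=c_2(x-r)^2$ and $Q_1(x)=c_1(x-r)$ already share the common linear factor $(x-r)$, no analogue of the Fisk-style decomposition \eqref{intlac1}--\eqref{intlac2} is needed: after factoring out the leading constant $c_2$, the operator is already in the canonical form of Theorem~\ref{thm:mltrtquadop}.

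Concretely, I write
\[
T=c_2\Big[(x-r)^2 D^2+\tfrac{c_1}{c_2}(x-r)D+\tfrac{c_0}{c_2}\Big]=:c_2 S,
\]
observe that since $c_2\neq 0$ the hyperbolicity-preserving property is equivalent for $T$ and $S$, and identify $S$ with the operator in Theorem~\ref{thm:mltrtquadop} (with $-r$ in place of $r$ there, $a=c_1/c_2$, and $R=c_0/c_2$). As soon as $c_1/c_2\ge 0$, Theorem~\ref{thm:mltrtquadop} asserts that $S$ is hyperbolicity preserving if and only if $c_0/c_2\in[0,\tfrac{1}{4}(c_1/c_2)^2]$, and multiplying by $c_2^2>0$ turns this into the pair $c_0/c_2\ge 0$ and $c_0c_2\le c_1^2/4$.

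For the direction \emph{same sign and inequality imply HP}, the hypotheses directly give $c_1/c_2\ge 0$ and $c_0/c_2\ge 0$, and the stated inequality rearranges to $c_0/c_2\le (c_1/c_2)^2/4$, so Theorem~\ref{thm:mltrtquadop} applies to $S$, and hence $T=c_2S$ preserves hyperbolicity. For the reverse direction I invoke Lemma~\ref{thm:bra-int} to deduce $Q_0\ll Q_1\ll Q_2$ from HP; a short Wronskian check yielding $W[Q_1,Q_0]=-c_0c_1$ and $W[Q_2,Q_1]=-c_1c_2(x-r)^2$, both required to be $\le 0$ on $\R$, forces $c_0c_1\ge 0$ and $c_1c_2\ge 0$, i.e., $c_0,c_1,c_2$ share a common sign, so that in particular $c_1/c_2\ge 0$. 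Theorem~\ref{thm:mltrtquadop} applied to $S$ then produces the interval condition, which rearranges to the claimed inequality.

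The main obstacle---modest in this case---is verifying the sign prerequisite $a\ge 0$ of Theorem~\ref{thm:mltrtquadop} in both directions, since that theorem is stated only for non-negative $a$; Br\"and\'en's Lemma~\ref{thm:bra-int} supplies this on the HP side, and on the algebraic side it is part of the hypothesis. The edge case $c_1=0$ is painless: Theorem~\ref{thm:mltrtquadop} with $a=0$ forces $R=0$, so $T$ is HP iff $c_0=0$, consistent with the stated inequality ($c_0c_2\le 0$) combined with the same-sign requirement ($c_0c_2\ge 0$).
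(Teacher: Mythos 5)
Your proposal is correct and follows exactly the route the paper intends, since the paper dispatches Theorem~\ref{thm:quad-op'} as \emph{mutatis mutandis} from the proof of Theorem~\ref{thm:quad-op}: factor out $c_2$, identify the operator with the canonical form in Theorem~\ref{thm:mltrtquadop}, and use Lemma~\ref{thm:bra-int} for the sign constraint. Your writeup usefully makes explicit two things the paper elides, namely the Wronskian computation showing that $Q_0\ll Q_1\ll Q_2$ forces $c_0c_1\ge 0$ and $c_1c_2\ge 0$, and the degenerate case $c_1=0$ where $a=0$ in Theorem~\ref{thm:mltrtquadop} forces $R=0$.
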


We now wish to find a condition that combines the statements of Theorem \ref{thm:quad-op'} and Theorem \ref{thm:quad-op}. But first a Lemma. 

\begin{lem} \label{thm:quarter}
For $c_i,r_j\in\R$, $i=0,1,2$, $j=1,2,3$, $c_2\neq 0$, let $Q_0(x)=c_0$, $Q_1(x)=c_1(x-r_3)$, $Q_2(x)=c_2(x-r_1)(x-r_2)$. If $T=Q_2(x)D^2+Q_1(x)D+Q_0(x)$ is hyperbolicity preserving then
\[
0\le c_1^2 -  4 c_0 c_2.
\]
Furthermore, if $r_1\not=r_2$ then 
\[
0\le c_1^2 \frac{(r_1-r_3)(r_3-r_2)}{(r_2-r_1)^2} - c_0 c_2\le c_1^2\frac{1}{4} - c_0 c_2. 
\]
Thus, if $0=c_1^2-4 c_0 c_2$ then $2r_3=r_1+r_2$. 
\end{lem}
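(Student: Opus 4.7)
The plan is to split into the cases $r_1=r_2$ and $r_1\ne r_2$, because these are handled by two different characterization theorems from the preceding section (Theorems \ref{thm:mltrtquadop}/\ref{thm:quad-op'} and Theorems \ref{thm:quadop}/\ref{thm:quad-op} respectively). In each case, $T$ being hyperbolicity preserving yields an inequality, and the goal is to massage that inequality into the claimed bound $0\le c_1^2-4c_0c_2$ (and, in the non-degenerate case, to sandwich a second quantity between $0$ and $c_1^2/4-c_0c_2$). The decisive algebraic step will be a single completion of squares that makes both the inequality and the equality condition transparent.

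First, I would dispose of the degenerate case $r_1=r_2$. Since $T$ is hyperbolicity preserving, Lemma \ref{thm:bra-int} forces $Q_1\ll Q_2$, and when $Q_2=c_2(x-r_1)^2$ has a double zero the only way a linear polynomial can interlace with $Q_2$ is to satisfy $r_3=r_1$ (if $c_1\ne 0$); if $c_1=0$ then Theorem \ref{thm:mltrtquadop} with $a=0$ forces $R=c_0=0$. In either situation, Theorem \ref{thm:quad-op'} (or a direct computation in the $c_1=0$ sub-case) produces $0\le c_1^2/4-c_0c_2$, which is equivalent to $0\le c_1^2-4c_0c_2$.

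Next, assume $r_1\ne r_2$. Theorem \ref{thm:quad-op} supplies immediately the lower inequality
\begin{equation}
0\le c_1^2\frac{(r_1-r_3)(r_3-r_2)}{(r_2-r_1)^2}-c_0c_2.
\end{equation}
For the upper inequality, reduce to an elementary one-variable estimate by writing $m=(r_1+r_2)/2$ and $h=(r_2-r_1)/2\ne 0$; then
\begin{equation}
(r_1-r_3)(r_3-r_2)=h^2-(r_3-m)^2,\qquad (r_2-r_1)^2=4h^2,
\end{equation}
so $(r_1-r_3)(r_3-r_2)/(r_2-r_1)^2=\tfrac14-(r_3-m)^2/(4h^2)\le\tfrac14$. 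Multiplying by $c_1^2\ge 0$ and subtracting $c_0c_2$ gives the right-hand bound; chaining with the left-hand bound yields $0\le c_1^2/4-c_0c_2$ and therefore $0\le c_1^2-4c_0c_2$ in the case $r_1\ne r_2$ as well.

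Finally, the "Thus" clause follows by examining the equality case of the completion of squares. If $0=c_1^2-4c_0c_2$ (and we are in the non-trivial regime $r_1\ne r_2$, $c_1\ne 0$, the only regime in which $r_3$ is constrained), the chain of inequalities collapses, forcing
\begin{equation}
\frac{(r_1-r_3)(r_3-r_2)}{(r_2-r_1)^2}=\frac14,
\end{equation}
which, by the identity above, happens exactly when $(r_3-m)^2=0$, i.e.\ $2r_3=r_1+r_2$. I do not anticipate a hard obstacle here; the main thing to be careful about is the case split at $r_1=r_2$ and the trivial sub-case $c_1=0$, since for those Theorem \ref{thm:quad-op} does not apply directly and one must invoke the multiple-root versions instead.
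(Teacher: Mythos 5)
Your proof is correct and follows essentially the same route as the paper: split on $r_1=r_2$ versus $r_1\neq r_2$, invoke Theorems~\ref{thm:quad-op'} and~\ref{thm:quad-op} respectively, and bound $(r_1-r_3)(r_3-r_2)/(r_2-r_1)^2\le 1/4$ by observing that $(2r_3-(r_1+r_2))^2\ge 0$ --- your substitution $m=(r_1+r_2)/2$, $h=(r_2-r_1)/2$ is just a tidier packaging of the paper's expansion of that same square. Your treatment of the $r_1=r_2$ case is in fact slightly more careful than the paper's (explicitly arguing $r_3=r_1$ from interlacing, and handling $c_1=0$ via Theorem~\ref{thm:mltrtquadop}), but the key algebraic idea is identical.
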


\begin{proof}
Theorem \ref{thm:quad-op'} deals with the case of when $r_1=r_2$, thus it suffices to show
\beq
0\le \frac{(r_1-r_3)(r_3-r_2)}{(r_2-r_1)^2}\le \frac{1}{4}.
\eeq
The left inequality holds because $Q_2$ and $Q_1$ have interlacing zeros by Lemma \ref{thm:bra-int}. To show the right inequality we proceed as follows, 
\beq
0\le (2r_3-(r_1+r_2))^2,
\eeq
\beq
4(r_1r_3+r_2r_3) \le (r_2+r_1)^2 +4 r_3^2,
\eeq
\beq
4(r_1r_3-r_1r_2-r_3^2+r_2r_3) \le r_2^2-2r_1r_2+r_1^2,
\eeq
\beq
4(r_1-r_3)(r_3-r_2) \le (r_2-r_1)^2. \qedhere 
\eeq
\end{proof}

\begin{thm} \label{thm:quad-wron}
For $c_i,r_j\in\R$, $i=0,1,2$, $j=1,2,3$, $c_2\neq 0$, let $Q_0(x)=c_0$, $Q_1(x)=c_1(x-r_3)$, $Q_2(x)=c_2(x-r_1)(x-r_2)$ with $Q_0(x)\ll Q_1(x)\ll Q_2(x)$. Then 
\[
T=Q_2(x) D^2+ Q_1(x) D + Q_0(x)
\]
preserves hyperbolicty if and only if
\[
W[Q_0,Q_2]^2-W[Q_0,Q_1]W[Q_1,Q_2]\le 0.
\]
\end{thm}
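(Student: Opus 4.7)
The plan is to combine Theorem \ref{thm:quad-op} and Theorem \ref{thm:quad-op'} with a direct Wronskian computation. Since the hypothesis $Q_0\ll Q_1\ll Q_2$ already encodes both the common-sign condition on $c_0,c_1,c_2$ and the interlacing of the zeros, what remains is to show that the pointwise Wronskian inequality on $\R$ is equivalent to the algebraic inequality appearing in Theorem \ref{thm:quad-op} (when $r_1\neq r_2$) or Theorem \ref{thm:quad-op'} (when $r_1=r_2$).

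First I would compute the three Wronskians directly:
\[
W[Q_0,Q_2]=c_0c_2(2x-r_1-r_2),\qquad W[Q_0,Q_1]=c_0c_1,
\]
and, via the identity
\[
(x-r_3)(2x-r_1-r_2)-(x-r_1)(x-r_2)=(x-r_3)^2+(r_1-r_3)(r_3-r_2),
\]
$W[Q_1,Q_2]=c_1c_2\bigl[(x-r_3)^2+(r_1-r_3)(r_3-r_2)\bigr]$. For $r_1\neq r_2$, I would next translate by $y:=x-(r_1+r_2)/2$ and set $s:=r_3-(r_1+r_2)/2$. The key algebraic identity $s^2+(r_1-r_3)(r_3-r_2)=(r_1-r_2)^2/4$ then converts $W[Q_0,Q_2]^2-W[Q_0,Q_1]W[Q_1,Q_2]$ into
\[
c_0c_2\Bigl[(4c_0c_2-c_1^2)\,y^2+2c_1^2 s\,y-c_1^2(r_1-r_2)^2/4\Bigr].
\]
Since $c_0c_2\ge 0$ by the common-sign hypothesis, nonpositivity on $\R$ reduces to the bracketed quadratic in $y$ being $\le 0$ for all $y$, i.e. $4c_0c_2-c_1^2\le 0$ together with a nonpositive discriminant.

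Carrying out the discriminant computation gives $4c_1^2 s^2\le (c_1^2-4c_0c_2)(r_1-r_2)^2$; substituting $4s^2=(r_1-r_2)^2-4(r_1-r_3)(r_3-r_2)$ and dividing by $(r_1-r_2)^2$ collapses this to
\[
0\le c_1^2\,\frac{(r_1-r_3)(r_3-r_2)}{(r_1-r_2)^2}-c_0c_2,
\]
which is exactly the inequality of Theorem \ref{thm:quad-op}. The boundary case $4c_0c_2=c_1^2$ forces the linear coefficient $2c_1^2 s$ to vanish, i.e.\ $r_3=(r_1+r_2)/2$, and matches the theorem once one invokes the bound $(r_1-r_3)(r_3-r_2)\le (r_1-r_2)^2/4$ from Lemma \ref{thm:quarter}. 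In the complementary case $r_1=r_2=r$, interlacing $Q_1\ll Q_2$ forces $r_3=r$, so the Wronskian expression collapses to $c_0c_2(4c_0c_2-c_1^2)(x-r)^2$, and $\le 0$ on $\R$ is equivalent to $c_1^2/4-c_0c_2\ge 0$, matching Theorem \ref{thm:quad-op'}.

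The main obstacle is the case bookkeeping in the degenerate regimes $c_0=0$, $c_1=0$, and $4c_0c_2=c_1^2$, where either the leading coefficient of the $y$-quadratic vanishes or a denominator introduced during the discriminant manipulation would be zero; each such case must be checked directly to confirm both directions of the equivalence. Once the translation and the identity $s^2+(r_1-r_3)(r_3-r_2)=(r_1-r_2)^2/4$ are in hand, however, the remainder of the argument is mechanical.
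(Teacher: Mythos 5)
Your proposal is correct and follows essentially the same route as the paper: compute $w(x)=W[Q_0,Q_2]^2-W[Q_0,Q_1]W[Q_1,Q_2]$, use $c_0c_2\ge 0$ to reduce nonpositivity to a condition on a quadratic (you via the discriminant, the paper via the vertex -- equivalent bookkeeping), split the same three cases ($r_1=r_2$; $c_1^2-4c_0c_2=0$ with $r_1\neq r_2$; generic), and match against Theorems \ref{thm:quad-op} and \ref{thm:quad-op'} together with Lemma \ref{thm:quarter}. The substitution $y=x-(r_1+r_2)/2$, $s=r_3-(r_1+r_2)/2$ with the identity $s^2+(r_1-r_3)(r_3-r_2)=(r_1-r_2)^2/4$ is a tidy simplification not in the paper, but it does not change the substance of the argument.
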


\begin{proof}
Since we are assuming $Q_0\ll Q_1 \ll Q_2$ then $c_0,c_1,c_2$ are of the same sign and $r_1\le r_3\le r_2$. Define,
\[
\barr{rcl}
w(x) & := & W[Q_0,Q_2]^2-W[Q_0,Q_1]W[Q_1,Q_2]\\
\\
&= &  c_0 c_2 (4 c_0 c_2-c_1^2 ) x^2 + 2 c_0 c_2 (-2 c_0 c_2 (r_1 + r_2) + c_1^2 r_3) x \\
\\
& & + c_0 c_2 (c_0 c_2 (r_1 + r_2)^2+c_1^2 (r_1 r_2 -r_1 r_3 - r_2 r_3)).\\
\earr
\]

Suppose $r_1=r_2$, then $w(x)=-c_0 c_2 (c_1^2-4c_0 c_2)(x-r_1)^2$. It is clear that $w(x)\le 0$ if and only if $0\le c_1^2-4 c_0 c_2$, thus we apply Theorem \ref{thm:quad-op'}. 

Suppose $0= c_1^2-4 c_0 c_2$ and $r_1\not= r_2$. By Lemma \ref{thm:quarter}, Theorem \ref{thm:quad-op} can restated as, ``$T$ is hyperbolicity preserving if and only if $2r_3=r_1+r_2$''. We recalculate $w$, under the assumption that $0=c_1^2-4 c_0 c_2$, 
\beq
w(x)= 4c_0^2c_2^2(2r_3-r_1-r_2)x +c_0^2c_2^2(2(r_1+r_2)(r_1+r_2-2r_3)-(r_1-r_2)^2).
\eeq
We now see that, $w(x)\le 0$, if and only if, $2r_3=r_1+r_2$. 

Thus we may assume $0\not=c_1^2-4c_0 c_2$ and $r_1\not= r_2$, in which case $w$ is a quadratic with vertex 
\beq
\lr{r_3,\frac{c_0c_1^2c_2}{c_1^2-4c_0c_2}\lr{c_0c_2(r_1-r_2)^2+c_1^2(r_1-r_3)(r_2-r_3)}}.
\eeq
Since $w$ is a quadratic then $w(x)\le 0$ if and only if the leading coefficient 
\beq
c_0c_2(4c_0c_1-c_1^2)<0
\eeq
and y-coordinate of the vertex 
\beq
\frac{c_0c_1^2c_2}{c_1^2-4c_0c_2}\lr{c_0c_2(r_1-r_2)^2+c_1^2(r_1-r_3)(r_2-r_3)}\le 0.
\eeq
Thus, we can say that $w(x)\le 0$ if and only if $0<c_1^2-4c_0c_1$ and $0\le c_1^2(r_1-r_3)(r_3-r_2)-c_0c_2(r_1-r_2)^2$. By Lemma \ref{thm:quarter} and Theorem \ref{thm:quad-op} those conditions are equivalent to $T$ preserving hyperbolicity. 
\end{proof}

It is unnecessary to assume that the polynomial coefficients of $T$ have real zeros, as this will natural follow from \ref{thm:bra-int}. Furthermore if $Q_2$ is a quadratic then Lemma \ref{thm:bra-int} states that $Q_1$ cannot be a non-zero constant if $T$ is to preserve hyperbolicity. To summarize we restate Theorem \ref{thm:quad-wron} with a little more generality. 

\begin{thmmain}
Suppose $Q_2,Q_1,Q_0$ are real polynomials such that $deg(Q_2)=2$, $deg(Q_1)\le 1$, $deg(Q_0)=0$. Then 
\begin{equation} \nonumber
T=Q_2D^2+Q_1D+Q_0
\end{equation}
preserves hyperbolicty if and only if
\begin{equation} \nonumber
W[Q_0,Q_2]^2-W[Q_0,Q_1]W[Q_1,Q_2]\le 0, \;\; \mbox{and} \;\; Q_0 \ll Q_1 \ll Q_2.
\end{equation}
\end{thmmain}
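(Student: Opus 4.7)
The plan is to deduce this essentially as a corollary of the already-established Theorem \ref{thm:quad-wron}, which proves exactly the same equivalence but under the \emph{a priori} factoring assumption $Q_2(x)=c_2(x-r_1)(x-r_2)$, $Q_1(x)=c_1(x-r_3)$, $Q_0(x)=c_0$. The only content I need to add is that, given the degree hypotheses, the requirement that the coefficients have real zeros is automatic, and this is furnished by Lemma \ref{thm:bra-int}.

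For the forward direction, I would assume $T$ preserves hyperbolicity and apply Lemma \ref{thm:bra-int} directly to $T$. This immediately yields $Q_0\ll Q_1\ll Q_2$; in particular each $Q_k$ is hyperbolic or identically zero. Hence $Q_2$ factors as $c_2(x-r_1)(x-r_2)$ with $c_2\ne 0$ and $r_1,r_2\in\R$, and $Q_1 = c_1(x-r_3)$ for some $c_1,r_3\in\R$. (A nonzero constant $Q_1$ is ruled out because a nonzero constant cannot be in proper position with a polynomial of degree $2$.) With these factored forms in hand, Theorem \ref{thm:quad-wron} applies verbatim and delivers the Wronskian inequality.

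For the converse, the hypothesis $Q_0\ll Q_1\ll Q_2$ already forces each coefficient to be hyperbolic, so $Q_2$ and $Q_1$ admit the required factorizations and $Q_0$ is a real constant. I can then invoke Theorem \ref{thm:quad-wron} in the other direction to conclude that $T$ preserves hyperbolicity.

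I do not anticipate any substantive obstacle; the one bookkeeping point is the boundary case $Q_1\equiv 0$, which corresponds to taking $c_1=0$ in Theorem \ref{thm:quad-wron}. There both $W[Q_0,Q_1]$ and $W[Q_1,Q_2]$ vanish, so the Wronskian inequality collapses to $W[Q_0,Q_2]^2\le 0$, i.e., $c_0\,Q_2'(x)\equiv 0$, forcing $c_0=0$; this is consistent with the operator $T=Q_2 D^2$ being hyperbolicity preserving via $T = D\cdot Q_2 \cdot D - Q_2' \cdot D$ arguments or directly by inspection. Everything else is a transparent translation between Definition \ref{def:proper} and the factored notation of Theorem \ref{thm:quad-wron}.
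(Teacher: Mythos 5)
Your proposal is correct and matches the paper's own treatment: the paper does not give a fresh proof of Theorem \ref{thm:main}, but simply remarks that it follows from Theorem \ref{thm:quad-wron} once one observes that Lemma \ref{thm:bra-int} supplies the real-rootedness and forbids $Q_1$ from being a nonzero constant, which is exactly what you do. One small inaccuracy in your closing aside: under the theorem's hypothesis $\deg(Q_0)=0$ the constant $Q_0$ is nonzero, so the $Q_1\equiv 0$ case simply cannot occur on either side of the equivalence (the Wronskian condition fails since $W[Q_0,Q_2]^2=c_0^2(Q_2')^2\not\le 0$, and by Theorem \ref{thm:quad-wron} such a $T$ does not preserve hyperbolicity); the observation that $T=Q_2D^2$ preserves hyperbolicity concerns $c_0=0$ and is therefore outside the scope of the statement, but it does no harm to the argument.
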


\section{Multiplier Sequences}

We now wish to establish several consequences of the above quadratic operators. 

\begin{defn}
Let $\{P_n\}$ be a basis for $\R[x]$. Let $\{A_n\}$ be a sequence of real numbers. If there is a linear operator, $T$, such that $T[P_n]=A_n P_n$ for every $n\in\N$, then we call $T$ a \emph{$P_n$-multiplier operator}. We will sometimes write $T=\{A_n\}$ when there is no question of the basis. Likewise, if there is a hyperbolicity preserver, $T$, such that $T[P_n]=A_n P_n$ for every $n\in\N$, then we call $T$ a \emph{$P_n$-multiplier squence} and sometimes write $T=\{A_n\}$. 
\end{defn}

There is a natural relationship between differential equations, differential operators, and $P_n$-multiplier sequences. This relationship has been used (\cite{bdfu,forgpio}), but never explicitly stated. 

\begin{thm}
Let $P_n$ be a basis for $\R[x]$. Suppose for each $n\in\N$, $P_n$ satisfies the differential equation
\[
\sum_{k=0}^\infty Q_k(x) y^{(k)}=A_n y
\]
where $\{Q_k\}$ is a sequence of real polynomials  and $\{A_n\}$ is a sequence of real numbers. Then $A_n$ is a $P_n$-multiplier sequence if and only if 
\[
\sum_{k=0}^\infty Q_k D^k
\]
is a hyperbolicity preserver. 
\end{thm}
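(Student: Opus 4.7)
The plan is to observe that the stated equivalence is essentially a tautology once one recognizes that the differential equation $\sum_k Q_k(x) P_n^{(k)}(x) = A_n P_n(x)$ is the same statement as $T[P_n] = A_n P_n$, where $T := \sum_{k=0}^\infty Q_k D^k$. The only substantive content is to verify that $T$ is well-defined as a linear operator on $\R[x]$ and that it coincides with the multiplier operator associated with the sequence $\{A_n\}$ relative to the basis $\{P_n\}$.

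First, I would check that $T$ is a well-defined linear operator on $\R[x]$. For any $f \in \R[x]$ of degree $d$, we have $D^k f = 0$ whenever $k > d$, so the infinite sum $\sum_{k=0}^\infty Q_k(x) f^{(k)}(x)$ collapses to the finite sum $\sum_{k=0}^d Q_k(x) f^{(k)}(x)$. Hence $T: \R[x] \to \R[x]$ is a well-defined linear operator.

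Next, the hypothesis gives $T[P_n] = A_n P_n$ for every $n \in \N$. Since $\{P_n\}$ is a basis for $\R[x]$, every $f \in \R[x]$ has a unique finite expansion $f = \sum_n c_n P_n$, and linearity of $T$ yields
\[
T[f] = \sum_n c_n T[P_n] = \sum_n c_n A_n P_n.
\]
Let $S$ denote the unique linear operator on $\R[x]$ determined by $S[P_n] = A_n P_n$ (i.e., the $P_n$-multiplier operator associated with $\{A_n\}$). The displayed computation shows $T[f] = S[f]$ for every $f$, so $T = S$ as operators on $\R[x]$.

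Finally, unwinding definitions: $\{A_n\}$ is a $P_n$-multiplier sequence precisely when the operator $S$ preserves hyperbolicity, and since $S = T$, this is equivalent to $T = \sum_{k=0}^\infty Q_k D^k$ being a hyperbolicity preserver. There is no genuine obstacle here; the result is simply making explicit the identification between an eigenbasis for a differential operator and the multiplier operator it induces.
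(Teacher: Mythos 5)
Your proof is correct. The paper states this theorem without proof (introducing it only by noting that the relationship ``has been used \ldots but never explicitly stated''), so there is no argument in the text to compare against; your reduction to the uniqueness of the linear operator determined by its action on a basis, together with the well-definedness observation that $\sum_{k} Q_k D^k$ collapses to a finite sum on any polynomial, supplies precisely the routine verifications that make the equivalence an immediate consequence of the paper's definition of a $P_n$-multiplier sequence.
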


Using Theorem \ref{thm:quad-op} we can restate the above theorem. 

\begin{thm}
Let $P_n$ be a simple set for $\R[x]$ and let $\{A_n\}$ be a sequence of real numbers. Let $c_i,r_j\in\R$, $i=0,1,2$, $j=1,2,3$, $c_2\not=0$. Suppose for each $n\in\N$ that $P_n$ satisfies the differential equation, 
\[
c_2(x-r_1)(x-r_2)y''+c_1(x-r_3)y'+c_0 y = A_n y
\]
then $\{A_n\}$ is a $P_n$-multiplier sequence, if and only if, $c_0,c_1,c_2$ are of the same sign and
\[
0\le c_1^2 \left (\frac{(r_1-r_3)(r_3-r_2)}{(r_2-r_1)^2}\right)-c_2c_0.
\]
In light of Theorem \ref{thm:quad-op'}, we take $\frac{(r_1-r_3)(r_3-r_2)}{(r_2-r_1)^2}=\frac{1}{4}$ in the case that $r_1=r_2$. 
\end{thm}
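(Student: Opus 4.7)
The plan is to recognize the differential equation as the eigenvalue problem for a quadratic operator and then invoke the characterizations already obtained. Concretely, set $Q_0(x)=c_0$, $Q_1(x)=c_1(x-r_3)$, and $Q_2(x)=c_2(x-r_1)(x-r_2)$, so that the hypothesis becomes $\sum_{k=0}^{2} Q_k(x)\, P_n^{(k)}(x)=A_n P_n(x)$ for every $n\in\N$. By the preceding theorem (the one immediately above, relating eigenvalue equations for a polynomial basis to multiplier sequences), $\{A_n\}$ is a $P_n$-multiplier sequence if and only if the quadratic operator $T=Q_2 D^2+Q_1 D+Q_0$ preserves hyperbolicity.

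Next I would split into the two cases already handled. If $r_1\neq r_2$, Theorem \ref{thm:quad-op} asserts that this operator preserves hyperbolicity if and only if $c_0,c_1,c_2$ are of the same sign and $0\le c_1^2 \frac{(r_1-r_3)(r_3-r_2)}{(r_2-r_1)^2}-c_0c_2$, which is precisely the desired biconditional. If instead $r_1=r_2$, then $Q_2(x)=c_2(x-r_1)^2$, and Theorem \ref{thm:quad-op'} gives that $T$ preserves hyperbolicity if and only if $c_0,c_1,c_2$ are of the same sign and $0\le \frac{c_1^2}{4}-c_0c_2$. Adopting the stated convention that $\frac{(r_1-r_3)(r_3-r_2)}{(r_2-r_1)^2}=\frac{1}{4}$ in the degenerate case $r_1=r_2$ merges both conclusions into the single formulation of the theorem.

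There is no real obstacle here beyond checking that the two characterization theorems are being applied in the correct direction; since both are biconditionals, chaining them with the preceding reduction yields a biconditional for $\{A_n\}$. One small bookkeeping point is that the preceding theorem was phrased for a basis of $\R[x]$, while the current statement uses a \emph{simple set}; but any simple set of $\R[x]$ (exactly one polynomial of each degree) is in particular a basis, so the reduction applies verbatim and no additional argument is required.
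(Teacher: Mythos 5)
Your proposal matches the paper's own (implicit) argument: the paper states this theorem without a separate proof, presenting it as a direct "restatement" of the preceding theorem that links eigenvalue equations to multiplier sequences, combined with Theorem \ref{thm:quad-op} (distinct zeros) and Theorem \ref{thm:quad-op'} (double zero). Your observation that a simple set is a basis is a harmless bookkeeping point that the paper leaves implicit.

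One small caveat, shared with the paper's formulation rather than being a defect of your argument alone: when $r_1=r_2$, Theorem \ref{thm:quad-op'} requires $Q_1(x)=c_1(x-r)$ with the \emph{same} $r$ as in $Q_2(x)=c_2(x-r)^2$. If $c_1\neq 0$ and $r_3\neq r_1=r_2$, the zeros of $Q_1$ and $Q_2$ cannot interlace, so $T$ fails to preserve hyperbolicity by Lemma \ref{thm:bra-int}, even though the displayed inequality (with the $\tfrac14$ convention) may hold. Thus the convention $\frac{(r_1-r_3)(r_3-r_2)}{(r_2-r_1)^2}=\frac14$ should be read as tacitly imposing $r_3=r_1=r_2$ (or $c_1=0$) in the degenerate case, which is the setting in which Theorem \ref{thm:quad-op'} actually applies; your appeal to that theorem is valid precisely under this reading.
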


A large number of very well known bases for $\R[x]$ satisfy differential equations of the above form (\cite[pg. 173, 188, 204, 258]{rain}). We exhibit classes of multiplier sequences for Legendre, Jacobi, and the standard basis. We state the cooresponding differential equations:

Standard Basis
\begin{equation}
Ax^2(x^n)''+Bx(x^n)'+C (x^n) = (An(n-1)+Bn+C) x^n 
\end{equation}

Lengendre Polynomials
\begin{equation}
A(x^2-1)P_n''+2AxP_n'+BP_n = (An(n+1)+B)P_n 
\end{equation}



Jacobi Polynomials
\begin{equation}
\barr{c}
A(x^2-1)(P_n^{\alpha,\beta}) '' + A((\alpha+\beta+2)x-(\beta-\alpha))(P_n^{\alpha,\beta})' +B P_n ^{\alpha,\beta}\\
\ \hfill = (An(n+\alpha+\beta+1)+B) P_n ^{\alpha,\beta}  
\earr
\end{equation}

We now establish several classes of multiplier sequences. 

\begin{thm} 
Let $A,B,C\in\R$. Then $\{An(n-1)+Bn+C\}$ is a classic multiplier sequence if and only if $A,B,C$ are of the same sign and
\begin{equation}\nonumber
0\le B^2-4AC.
\end{equation} 
\end{thm}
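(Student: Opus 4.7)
The plan is to specialize the preceding multiplier-sequence theorem (the one relating a $P_n$-multiplier sequence to the quadratic differential operator) to the standard basis $P_n(x) = x^n$. From the displayed relation
\[
Ax^2 (x^n)'' + Bx (x^n)' + C(x^n) = \bigl(An(n-1) + Bn + C\bigr) x^n,
\]
I read off $Q_2(x) = Ax^2$, $Q_1(x) = Bx$, $Q_0(x) = C$, matching the template $c_2(x-r_1)(x-r_2) y'' + c_1(x-r_3) y' + c_0 y = A_n y$ with $c_2 = A$, $c_1 = B$, $c_0 = C$, and $r_1 = r_2 = r_3 = 0$.

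Next I would invoke that theorem in the coincident-root case. Since $r_1 = r_2 = 0$, the convention stated there (inherited from Theorem \ref{thm:quad-op'}) instructs us to substitute $\tfrac{1}{4}$ for $\frac{(r_1-r_3)(r_3-r_2)}{(r_2-r_1)^2}$. The theorem then asserts that $\{An(n-1)+Bn+C\}$ is a multiplier sequence if and only if $A$, $B$, $C$ are of the same sign and
\[
0 \le \tfrac{1}{4} B^2 - AC,
\]
which is equivalent to $0 \le B^2 - 4AC$. This delivers both directions of the claimed biconditional in a single stroke.

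The only point needing care is confirming that the standard-basis case really falls into the coincident-root branch, which is transparent from $Q_2(x) = Ax^2 = A(x-0)^2$. If one wishes the statement to include the degenerate case $A = 0$ (where the preceding theorem formally requires $c_2 \ne 0$), one handles it separately: the operator collapses to $BxD + C$ with $T[x^n] = (Bn+C)x^n$, and either a direct verification or the classical P\'olya--Schur characterization shows that $\{Bn+C\}$ is a multiplier sequence iff $B$ and $C$ share a sign, which is exactly what the stated conditions reduce to (since $B^2 \ge 0$ is automatic). Beyond this small bookkeeping, no real obstacle is expected, since the theorem is nothing more than a direct specialization of Theorem \ref{thm:quad-op'}.
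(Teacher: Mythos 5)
Your proposal is correct and follows exactly the route the paper intends: the theorem is stated without proof as a direct specialization of the preceding $P_n$-multiplier-sequence theorem (itself grounded in Theorem \ref{thm:quad-op'}) to $Q_2 = Ax^2$, $Q_1 = Bx$, $Q_0 = C$ with $r_1 = r_2 = r_3 = 0$, so the coincident-root convention gives $\tfrac14 B^2 - AC \ge 0$, i.e., $B^2 - 4AC \ge 0$. Your remark on the degenerate case $A = 0$ (where $c_2 \ne 0$ fails and one falls back on the classical characterization of the arithmetic sequence $\{Bn+C\}$) is a sensible piece of bookkeeping that the paper leaves implicit.
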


\begin{thm} 
Let $A,B\in\R$, $A\not=0$. Then $\{An(n+1)+B\}$ is a $P_n$-multiplier sequence (Lengendre multiplier sequence) if and only if
\begin{equation}\nonumber
0\le \frac{B}{A} \le 1.
\end{equation} 
\end{thm}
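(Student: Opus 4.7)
The plan is to leverage the bridge, stated in the previous theorem, between $P_n$-multiplier sequences and the hyperbolicity preservation of the associated second-order differential operator. Since the Legendre polynomials form a basis of $\R[x]$ and satisfy the eigenvalue equation
\[
A(x^2-1)P_n'' + 2AxP_n' + BP_n = (An(n+1)+B)P_n,
\]
the sequence $\{An(n+1)+B\}$ is a Legendre multiplier sequence if and only if the quadratic operator
\[
T = A(x^2-1)D^2 + 2AxD + B
\]
preserves hyperbolicity. This reduces the problem to verifying a hyperbolicity preservation statement for a very specific quadratic operator.

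Next I would apply Theorem \ref{thm:quad-op} directly, with the parameter identification $c_2 = A$, $r_1 = 1$, $r_2 = -1$, $c_1 = 2A$, $r_3 = 0$, $c_0 = B$. The ``common sign'' requirement on $c_0, c_1, c_2$ becomes the requirement that $B$, $2A$, and $A$ share a sign, which (since $A \neq 0$) is equivalent to $AB \ge 0$, i.e., $B/A \ge 0$. The inequality requirement becomes
\[
0 \;\le\; (2A)^2 \cdot \frac{(1-0)(0-(-1))}{(-1-1)^2} - BA \;=\; A^2 - AB.
\]
Dividing through by $A^2 > 0$ yields $B/A \le 1$. Conjoining the two conditions gives precisely $0 \le B/A \le 1$, as claimed.

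There is no substantial obstacle here; the heavy analytic work was carried out in Theorem \ref{thm:quad-op}, and the present statement is a clean plug-and-chug application. The only minor point I would watch is the boundary case $B = 0$ (and the degenerate case $B = A$), but these are absorbed by reading ``same sign'' permissively to include zero, which is consistent with the convention in Definition \ref{def:proper} that the zero polynomial is in proper position with any hyperbolic polynomial.
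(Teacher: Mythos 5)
Your proof is correct and matches the approach the paper intends: the paper states the general restatement theorem connecting the differential equation $c_2(x-r_1)(x-r_2)y''+c_1(x-r_3)y'+c_0y=A_ny$ to $P_n$-multiplier sequences, and then states this Legendre corollary without proof, expecting exactly the plug-and-chug with $c_2=A$, $r_1=1$, $r_2=-1$, $c_1=2A$, $r_3=0$, $c_0=B$ that you carried out (the adjacent Jacobi case is proved this way). Your arithmetic $4A^2\cdot\frac{1}{4}-AB=A^2-AB$ and the reduction to $0\le B/A\le 1$ are correct, and your caveat about reading ``same sign'' permissively when $B=0$ is a legitimate observation about the paper's implicit convention.
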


\begin{thm} \label{jacquadms}
Let $A,B\in\R$, $A\not=0$. Then $\{An(n+\alpha+\beta+1)+B\}$ is a $P_n^{\alpha,\beta}$-multiplier sequence (Jacobi multiplier sequence) if and only if
\begin{equation}\nonumber
0\le \frac{B}{A} \le (\alpha+1)(\beta+1) \text{ and }-1\le \alpha,\beta.
\end{equation} 
\end{thm}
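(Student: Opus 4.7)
The plan is to recognize that $\{An(n+\alpha+\beta+1)+B\}$ is a $P_n^{\alpha,\beta}$-multiplier sequence exactly when the operator
\[
T := A(x^2-1)D^2 + A\bigl((\alpha+\beta+2)x - (\beta-\alpha)\bigr)D + B
\]
extracted from the Jacobi differential equation is hyperbolicity preserving, and then to apply Theorem \ref{thm:quad-op} (falling back on Theorem \ref{thm:main} only in the degenerate case).

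Matching coefficients with the notation of Theorem \ref{thm:quad-op}, I would take $c_2 = A$, $\{r_1,r_2\} = \{1,-1\}$, $c_1 = A(\alpha+\beta+2)$, $r_3 = (\beta-\alpha)/(\alpha+\beta+2)$, and $c_0 = B$. The same-sign requirement forces $\alpha+\beta+2 \ge 0$ and $B/A \ge 0$, while the interlacing of the zeros of $Q_1$ and $Q_2$ (built into Theorem \ref{thm:quad-op} via Lemma \ref{thm:bra-int}) amounts to $-1 \le r_3 \le 1$, which a brief linear manipulation shows is equivalent to $\alpha \ge -1$ and $\beta \ge -1$. Using the factorization $(\alpha+\beta+2)^2 - (\beta-\alpha)^2 = 4(\alpha+1)(\beta+1)$, the quantitative inequality
\[
0 \le c_1^2 \frac{(r_1-r_3)(r_3-r_2)}{(r_2-r_1)^2} - c_0 c_2
\]
of Theorem \ref{thm:quad-op} then collapses to $AB \le A^2(\alpha+1)(\beta+1)$, i.e.\ $B/A \le (\alpha+1)(\beta+1)$, which together with $B/A \ge 0$ gives the required double inequality.

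The main (mild) obstacle is the degenerate case $\alpha = \beta = -1$, where $Q_1 \equiv 0$ and Theorem \ref{thm:quad-op} does not directly apply. Here I would invoke Theorem \ref{thm:main} instead: the proper-position conditions $Q_0 \ll 0 \ll Q_2$ hold automatically by the convention on the zero polynomial, while the Wronskian inequality reduces to $(2ABx)^2 \le 0$, forcing $B = 0$, which is consistent with $0 \le B/A \le 0 = (\alpha+1)(\beta+1)$. Patching together the generic regime $\alpha+\beta+2 > 0$ and the single degenerate point $\alpha=\beta=-1$ then yields the stated equivalence.
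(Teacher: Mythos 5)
Your approach is essentially the paper's: read off $c_2=A$, $r_1,r_2=\pm 1$, $c_1=A(\alpha+\beta+2)$, $r_3=(\beta-\alpha)/(\alpha+\beta+2)$, $c_0=B$ from the Jacobi differential equation, apply Theorem \ref{thm:quad-op}, and simplify via the identity $(\alpha+\beta+2)^2-(\beta-\alpha)^2=4(\alpha+1)(\beta+1)$ — the paper's own proof is exactly this computation, compressed into a one-line ``Notice the equivalence.'' Your explicit handling of the degenerate point $\alpha=\beta=-1$ (where $r_3$ is undefined and you fall back on Theorem \ref{thm:main}) is a welcome refinement that the paper glosses over. One edge case is worth adding: when $\alpha+\beta+2=0$ but $\alpha\neq\beta$, the middle coefficient $Q_1(x)=A\bigl((\alpha+\beta+2)x-(\beta-\alpha)\bigr)=-A(\beta-\alpha)$ is a \emph{nonzero} constant, which does not have the form $c_1(x-r_3)$ with $c_1=0$, so neither Theorem \ref{thm:quad-op} nor your $\alpha=\beta=-1$ patch applies; here Lemma \ref{thm:bra-int} rules out hyperbolicity preservation outright (a nonzero constant cannot be in proper position with a genuine quadratic), which is consistent with the stated condition failing, since $(\alpha+1)(\beta+1)=-(\alpha+1)^2<0$ on this line.
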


\begin{proof}
Notice, 
\begin{equation}
(A(\alpha+\beta+2))^2\left(1-\frac{\beta-\alpha}{\alpha+\beta+2}\right) \left(1+\frac{\beta-\alpha}{\alpha+\beta+2}\right)-4AB\ge 0 
\end{equation}
and 
\begin{equation}
A,A(\alpha+\beta+2),B \text{ are of the same sign}
\end{equation}
is equivalent to 
\begin{equation}
0\le \frac{B}{A} \le (\alpha+1)(\beta+1)\text{ and }-1\le\alpha,\beta.
\end{equation} 
\end{proof}

\newpage


\begin{thebibliography}{99}

\bibitem{bdfu} K.\ Blakeman, E.\ Davis, T.\ Forg\'acs, and K.\ Urabe, \textit{On Legendre multiplier sequences}, arXiv: 1108.4662.

\bibitem{BB07} J.\ Borcea and  P.\ Br\"and\'en, \textit{P\'olya-Schur master theorems for circular domains and their boundaries}, Ann.\ of Math.\ (2)\ {\bf 170} (2009), 465-492.

\bibitem{BB10a} J.\ Borcea and  P.\ Br\"and\'en, \textit{Multivariate P\'olya-Schur classification problems in the Weyl algebra}, Proc.\ Lond. Math.\ Soc.\ (3)\ {\bf 101} (2010), no. 1, 73-104.

\bibitem{B10} P.\ Br\"and\'en, \textit{A generalization of the Heine-Stieltjes Theorem}, Constr. Approx. 34 (2011), no. 1, 135–148.

\bibitem{CC04} T.\ Craven and G.\ Csordas , \textit{Composition theorems, multiplier sequences and complex zero decreasing sequences}, Value distribution theory and related topics, 131-166, Adv. Complex Anal. Appl., {\bf 3} Kluwer Acad. Publ., Boston, MA, 2004.


\bibitem{fisk08} S.\ Fisk, \textit{Polynomials, roots, and interlacing}, arXiv: 0612833v2.



\bibitem{forgpio}  T.\ Forg\'acs and A.\ Pitrowski, \textit{Multiplier sequences for generalized Laguerre bases}, arXiv: 1002.0759.

\bibitem{peet}  J.\ Peetre, \textit{Une caract\'erisation abstraite des op\'erateurs diff\'erentiels}, Math. Scand. 7 (1959), 211–218; Erratum, ibid. 8 (1960), 116–120.

\bibitem{piotr}  A.\ Pitrowski, \textit{Linear Operators and the Distribution of Zeros of Entire Functions,} Ph.\ D.\ Dissertation, University of Hawai‘i, Manoa, May 2007.	

\bibitem{PS}  G.\ P\'olya and J.\ Schur, \textit{\"Uber zwei Arten von Faktorenfolgen in der Theorie der algebraischen Gleichungen,} J. Reine Angew. Math. \ {\bf 144} (1914), 89-113.

\bibitem{RS} Q.\ I.\ Rahman, G.\ Schmeisser, \textit{Analytic Theory of Polynomials,}  Oxford University Press, 2002.

\bibitem{rain}  E.\  Rainville, \textit{Special Functions,} Chelsea, New York, 1960.

\end{thebibliography}
\end{document}